\newtheorem{theorem}{Theorem}[section]
\newtheorem{prop}[theorem]{Proposition}
\newtheorem{lemma}[theorem]{Lemma}
\newtheorem{corollary}[theorem]{Corollary}
\theoremstyle{definition}
\newtheorem{definition}[theorem]{Definition}
\newtheorem{example}[theorem]{Example}
\newtheoremstyle{underline}
{-1.5mm}        % Space above, empty = `usual value'
{}              % Space below
{\parshape 1 1em \dimexpr\textwidth-1em} % Body font
\newtheoremstyle{underline2}
{-1.5mm}{}{\parshape 1 1.5em \dimexpr\textwidth-1.5em}{}{\bfseries}{:}{1.5mm}
{{\underline{\thmname{#1}\thmnumber{ #2}}}}
\theoremstyle{underline}
\newtheorem{case}{Case}
\theoremstyle{underline2}
\newtheorem{subcase}{Subcase}
\numberwithin{subcase}{case}
\newcommand{\conv}[1]{\mathrm{conv}\left\{#1\right\}}
\newcommand{\R}{\mathbb{R}}
\newcommand{\Z}{\mathbb{Z}}
\newcommand{\N}{\mathbb{N}}
\newcommand{\Pq}{\Delta_{(1,\bq)}}
\newcommand{\ba}{\mathbf{a}}
\newcommand{\bb}{\mathbf{b}}
\newcommand{\be}{\mathbf{e}}
\newcommand{\bbf}{\mathbf{f}^{\pi}}
\newcommand{\bbg}{\mathbf{g}^{\pi}}
\newcommand{\bq}{\mathbf{q}}
\newcommand{\br}{\mathbf{r}}
\newcommand{\bt}{\mathbf{t}}
\newcommand{\bx}{\mathbf{x}}
\newcommand{\by}{\mathbf{y}}
\newcommand{\sQ}{\mathcal{Q}}
\newcommand{\sG}{\mathcal{G}}
\newcommand{\sA}{\mathcal{A}}
\newcommand{\sM}{\mathcal{M}}
\newcommand{\sR}{\mathcal{R}}
\newcommand{\inG}{in_{<_{lex}}(\sG)}
\newcommand{\ds}{\displaystyle}
\newcommand{\zsupp}[1]{\mathrm{supp}_{\mathbf{z}}\left(#1\right)}
\newcommand{\abs}[1]{ \left\lvert#1\right\rvert} % absolute value: single vertical bars
\newcommand\commentout[1]{}
\def\@tocline#1#2#3#4#5#6#7{\relax
  \ifnum #1>\c@tocdepth % then omit
  \else
    \par \addpenalty\@secpenalty\addvspace{#2}%
    \begingroup \hyphenpenalty\@M
    \@ifempty{#4}{%
      \@tempdima\csname r@tocindent\number#1\endcsname\relax
    }{%
      \@tempdima#4\relax
    }%
    \parindent\z@ \leftskip#3\relax \advance\leftskip\@tempdima\relax
    \rightskip\@pnumwidth plus4em \parfillskip-\@pnumwidth
    #5\leavevmode\hskip-\@tempdima
      \ifcase #1
       \or\or \hskip 1em \or \hskip 2em \else \hskip 3em \fi%
      #6\nobreak\relax
    \hfill\hbox to\@pnumwidth{\@tocpagenum{#7}}\par% <---- \dotfill -> \hfill
    \nobreak
    \endgroup
  \fi}
\begin{document}

% --LINE NUMBER COMMANDS:
% \linenumbers

% title/author/abstract information

\title[A Regular Unimodular Triangulation\ldots]{A regular unimodular triangulation of reflexive 2-supported weighted projective space simplices}

\author{Benjamin Braun}
\address{Department of Mathematics\\
  University of Kentucky\\
  Lexington, KY 40506--0027}
\email{benjamin.braun@uky.edu}

\author{Derek Hanely}
\address{Department of Mathematics\\
  University of Kentucky\\
  Lexington, KY 40506--0027}
\email{derek.hanely@uky.edu}

\subjclass[2020]{Primary: 52B20}

% 52B20 Lattice Polytopes (including relations with commutative algebra and algebraic geometry)

\thanks{BB was partially supported by NSF award DMS-1953785. 
DH was partially supported by NSF award DUE-1356253.}

\date{22 October 2020}

\begin{abstract}
For each integer partition $\bq$ with $d$ parts, we denote by $\Pq$ the lattice simplex obtained as the convex hull in $\R^d$ of the standard basis vectors along with the vector $-\bq$.
For $\bq$ with two distinct parts such that $\Pq$ is reflexive and has the integer decomposition property, we establish a characterization of the lattice points contained in $\Pq$.
We then construct a Gr\"{o}bner basis with a squarefree initial ideal of the toric ideal defined by these simplices.
This establishes the existence of a regular unimodular triangulation for reflexive 2-supported $\Pq$ having the integer decomposition property. 
\end{abstract}

\maketitle

%\tableofcontents

%%%%%%%%%%%%%%%%%%%%%%%%%%%%%%%%%%%%%%%%%%%%%%%%%%%%% 

\section{Introduction \& Background}

Consider an integer partition $\bq\in \Z_{\geq 1}^d$ where $q_1\leq\cdots\leq q_d$.

\begin{definition}
  The lattice simplex associated with $\bq$ is
  \[
    \Pq := \conv{\be_1,\ldots,\be_d,-\sum_{i=1}^d q_i\be_i} \subset \R^d,
  \]
  where $\be_i$ denotes the $i$-th standard basis vector in $\R^d$.
Set $N(\bq):=1+\sum_iq_i$.
\end{definition}

It is straightforward to prove \cite[Proposition 4.4]{NillSimplices} that $N(\bq)$ is the normalized volume of $\Pq$.
  Let $\sQ$ denote the set of all lattice simplices of the form $\Pq$.
The simplices in $\sQ$ correspond to a subset of the simplices defining weighted projective spaces \cite{conrads}; the vector $(1,\bq)$ gives the weights of the associated weighted projective space.
Simplices in $\sQ$ have been the subject of active recent study~\cite{LaplacianSimplicesDigraphs,BraunDavisReflexive,BraunDavisSolusIDP,BraunLiu,LiuSolusUnimodalityPositivity,SolusNumeralSystems}.
Given a vector of distinct positive integers $\br = (r_1,\ldots,r_t)$, we write
\[
  (r_1^{x_1},r_2^{x_2},\ldots,r_t^{x_t}):=(\underbrace{r_1,r_1,\ldots,r_1}_{x_1\text{ times}},\underbrace{r_2,r_2,\ldots,r_2}_{x_2\text{ times}},\ldots,\underbrace{r_t,r_t,\ldots,r_t}_{x_t\text{ times}}) \, .
\]
There is a natural stratification of $\sQ$ based on the distinct entries in the vector $\bq$, leading to the following definition.

\begin{definition}
  If $\bq=(q_1,\ldots,q_d)=(r_1^{x_1},r_2^{x_2},\ldots,r_t^{x_t})$, we say that both $\bq$ and $\Pq$ are \emph{supported} by the vector $\br = (r_1,\ldots,r_t)$ with \emph{multiplicity} $\bx=(x_1, \dots, x_t)$.
  We write $\bq=(\br,\bx)$ in this case, and say that $\bq$ is \emph{$t$-supported}.
\end{definition}

We say $\bq$ and $\Pq$ are \emph{reflexive} if 
\begin{align*}
q_i \text{ divides } 1 + \sum_{j =1}^d q_j \quad\text{for all $1 \le i \le d$\,.}
\label{equ:conrads}
\end{align*}
%Equivalently, if $\bq=(\br,\bx)$, then $\Pq$ is reflexive if and only if $\lcm{r_1,\dots, r_t}$ divides $1+\sum_{i=1}^t x_i r_i$.
When $\bq$ is reflexive, the geometric dual $\Pq^*$ of $\Pq$ is a lattice polytope, where
\[
	\Pq^* = \{ \by \in \R^d \mid \langle \ba,\by\rangle \geq -1 \text{ for all } \ba \in \Pq\} \, .
\]
Lattice polytopes whose duals are also lattice polytopes are known as reflexive polytopes, hence our nomenclature.
We say a lattice polytope $P$ has the \emph{integer decomposition property}, or \emph{is IDP}, if for every $M\in \Z_{\geq 1}$ and $p\in MP\cap \Z^d$, there exist $p_1,\ldots,p_M\in P\cap Z^d$ such that $p=p_1+\cdots+p_M$.
A detailed study of reflexive IDP $\Pq$ was initiated in~\cite{BraunDavisSolusIDP}, motivated by several open problems regarding unimodality in Ehrhart theory.
Braun, Davis, and Solus~\cite[Theorem 4.1]{BraunDavisSolusIDP} classified the 2-supported reflexive IDP $\Pq$, proving that every such $\bq$ is of the form $(r_1^{x_1},r_2^{x_2})$ where either
\begin{align*}
    r_1 &> 1 \text{ with } r_2 = 1+r_1x_1 \text{ and } x_2 = r_1 - 1, \text{ or} \\
    r_1 &= 1 \text{ with } r_2 = 1 + x_1 \text{ and } x_2 \text{ arbitrary}.
\end{align*}
They further established that every $\Pq$ has a unimodal Ehrhart $h^*$-vector.

It is well known that if a lattice polytope $P$ admits a unimodular triangulation, then $P$ is IDP.
Further, Bruns and R\"{o}mer~\cite{BrunsRomer} proved that if $P$ is reflexive and admits a regular unimodular triangulation, then $P$ has a unimodal Ehrhart $h^*$-vector.
Thus, it is of interest to determine whether or not reflexive IDP lattice polytopes admit regular unimodular triangulations.
It has been shown~\cite{BraunDavisSolusIDP} that each 2-supported reflexive IDP $\Pq$ with $\bq=(1^{x_1},(1+x_1)^{x_2})$ arises as an affine free sum of $\Delta_{(1,1^{x_1})}$ and $\Delta_{(1,1^{x_2})}$.
Thus, every $\Pq$ of this form admits a regular unimodular triangulation, for example the triangulation arising as the join of the boundary of $\Delta_{(1,1^{x_1})}\times (0^{x_2})$ with the unique unimodular triangulation of $(0^{x_1})\times \left(\Delta_{(1,1^{x_2})}-\be_{x_1+1}\right)$ in $\R^{x_1+x_2}$.

In this paper, we study regular unimodular triangulations for the other 2-supported case.
Thus, for the remainder of this paper, we assume that $\bq=(r_1^{x_1},(1+r_1x_1)^{r_1-1})$ with $r_1>1$.
Observe that $\dim \Pq = d = x_1 + x_2 = r_1 + x_1 - 1$. 
Define $\sA' = \{\ba'_1,\ldots,\ba'_{r_1+3} , \bb'_1, \ldots, \bb'_d\}\subset \Z^d$, where $\ba'_{r_1+1} = ((-1)^{x_1}, (-x_1)^{r_1-1})$, $\ba'_{r_1+2} = (0^{x_1}, (-1)^{r_1-1})$, $\ba'_{r_1+3} = (0^{x_1}, 0^{r_1-1})$, $\ba'_{i} = (r_1-i+1)\ba'_{r_1+1} + \ba'_{r_1+2}$ for $1\leq i \leq r_1$, and $\bb'_j = \be_{d-j+1}$ for $1\leq j \leq d$. (Note that $\ba'_1 = -\bq$, so all vertices of $\Pq$ are contained in $\sA'$.)

\begin{example}
Let $r_1=6$ and $x_1=4$, so $\bq = (6^4,25^5)\in \Z^9$. The elements of $\sA'$ are given by the columns of the following matrix:
\begin{align*}
    \begin{blockarray}{*{18}c}
    \ba'_1 & \ba'_2 & \ba'_3 & \ba'_4 & \ba'_5 & \ba'_6 & \ba'_7 & \ba'_8 & \ba'_9 & \bb'_1 & \bb'_2 & \bb'_3 & \bb'_4 & \bb'_5 & \bb'_6 & \bb'_7 & \bb'_8 & \bb'_9  \\
        \begin{block}{(*{18}c)}
        -6  & -5  & -4  & -3  & -2 & -1 & -1 &  \phantom{-}0 & 0 & 0 & 0 & 0 & 0 & 0 & 0 & 0 & 0 & 1 \\
        -6  & -5  & -4  & -3  & -2 & -1 & -1 &  \phantom{-}0 & 0 & 0 & 0 & 0 & 0 & 0 & 0 & 0 & 1 & 0 \\
        -6  & -5  & -4  & -3  & -2 & -1 & -1 &  \phantom{-}0 & 0 & 0 & 0 & 0 & 0 & 0 & 0 & 1 & 0 & 0 \\
        -6  & -5  & -4  & -3  & -2 & -1 & -1 &  \phantom{-}0 & 0 & 0 & 0 & 0 & 0 & 0 & 1 & 0 & 0 & 0 \\
        -25 & -21 & -17 & -13 & -9 & -5 & -4 & -1 & 0 & 0 & 0 & 0 & 0 & 1 & 0 & 0 & 0 & 0 \\
        -25 & -21 & -17 & -13 & -9 & -5 & -4 & -1 & 0 & 0 & 0 & 0 & 1 & 0 & 0 & 0 & 0 & 0 \\
        -25 & -21 & -17 & -13 & -9 & -5 & -4 & -1 & 0 & 0 & 0 & 1 & 0 & 0 & 0 & 0 & 0 & 0 \\
        -25 & -21 & -17 & -13 & -9 & -5 & -4 & -1 & 0 & 0 & 1 & 0 & 0 & 0 & 0 & 0 & 0 & 0 \\
        -25 & -21 & -17 & -13 & -9 & -5 & -4 & -1 & 0 & 1 & 0 & 0 & 0 & 0 & 0 & 0 & 0 & 0 \\
        \end{block}
    \end{blockarray}
\end{align*}
\end{example}

In this paper, we prove the following theorems.

\begin{theorem}
The lattice points of the IDP simplex $\Pq$ are given by $\sA'$. 
\label{thm:latticepoints}
\end{theorem}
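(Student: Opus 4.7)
The plan is to enumerate the lattice points of $\Pq$ via barycentric coordinates with respect to the vertex set $\{-\bq, \be_1, \ldots, \be_d\}$. Every $p \in \Pq$ has a unique representation $p = \lambda_0(-\bq) + \sum_{j=1}^d \lambda_j \be_j$ with $\lambda_i \geq 0$ and $\sum_i \lambda_i = 1$; summing coordinates forces $\lambda_0 = (1 - \sum_j p_j)/N(\bq)$. Setting $m := 1 - \sum_j p_j$, which must lie in $\{0, 1, \ldots, N(\bq)\}$, and using $q_j = r_1$ for $j \leq x_1$, $q_j = 1+r_1 x_1$ for $j > x_1$, together with $N(\bq) = r_1(1+r_1 x_1)$, the constraints $\lambda_j \geq 0$ translate to
\[
  p_j \geq -m/(1+r_1 x_1) \text{ for } j \leq x_1, \qquad p_j \geq -m/r_1 \text{ for } j > x_1.
\]

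For the inclusion $\sA' \subseteq \Pq \cap \Z^d$, I would directly compute the barycentric coordinates of each element of $\sA'$. The cases $\bb'_j$ and $\ba'_1 = -\bq$ are immediate, and $\ba'_{r_1+3} = 0$ has $\lambda_0 = 1/N(\bq)$ and $\lambda_j = q_j/N(\bq)$. For $\ba'_i$ with $1 \leq i \leq r_1$, setting $k = r_1 - i + 1$, a direct substitution gives $\lambda_0 = (1 + kx_1)/(1 + r_1 x_1)$, $\lambda_j = (r_1 - k)/(1+r_1 x_1) \geq 0$ for $j \leq x_1$, and $\lambda_j = 0$ for $j > x_1$; the cases $\ba'_{r_1+1}, \ba'_{r_1+2}$ are handled analogously.

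The converse inclusion is the substantive step. Given $p \in \Pq \cap \Z^d$, set $a := \lfloor m/(1+r_1 x_1) \rfloor$ and $b := \lfloor m/r_1 \rfloor$, so integrality of $p$ strengthens the bounds to $p_j \geq -a$ (resp.\ $\geq -b$). Substituting $u_j := p_j + a \geq 0$ and $v_j := p_j + b \geq 0$, and writing $m = br_1 + \beta$ with $0 \leq \beta < r_1$, turns $\sum_j p_j = 1 - m$ into $\sum_j u_j + \sum_j v_j = S$, where $S = 1 - \beta + ax_1 - b$. By stars and bars, the lattice points of $\Pq$ with this $m$ number $\binom{d + S - 1}{d - 1}$, so $S \geq 0$ is necessary, forcing $b \leq 1 + ax_1$. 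Combined with $m \geq a(1+r_1 x_1)$, a case analysis on $a \in \{0, 1, \ldots, r_1\}$ pins the admissible pairs $(a, b)$ to $(0,0), (0,1), (1, x_1)$, and $(k, 1+k x_1)$ for $1 \leq k \leq r_1$. In every case except $(a,b)=(0,0), m=0$, one finds $S = 0$, giving a unique lattice point matching an element of $\sA'$; the exceptional case yields $S = 1$ and the $d$ basis vectors $\bb'_j$. Counting gives $d + r_1 + 3 = 2r_1 + x_1 + 2 = |\sA'|$, completing the enumeration.

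The main obstacle is the case analysis on $(a, b)$. The key observation is that for $a \geq 2$ the constraint $\alpha := m - a(1+r_1 x_1) \geq 0$ together with $S \geq 0$ forces $\beta = 0$, hence $b = 1 + ax_1$ uniquely; analogous bookkeeping distinguishes $(1, x_1)$ from $(1, x_1+1)$ for $a = 1$ and confirms that $a = r_1$ yields only the vertex $\ba'_1 = -\bq$.
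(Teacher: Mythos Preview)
Your proposal is correct and takes a genuinely different route from the paper's argument.

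The paper's proof proceeds in two decoupled steps. First it \emph{counts} the lattice points indirectly: it invokes the closed formula for the $h^*$-polynomial of $\Pq$ from \cite[Theorem 2.2]{BraunDavisSolusIDP}, rewrites the exponent $w(\bq,b)$ in terms of the quotient and remainder of $b$ modulo $1+r_1x_1$, and solves $w(\bq,b)=1$ to obtain $h_1^*=r_1+2$, hence $|\Pq\cap\Z^d|=r_1+d+3$. Second, it records the facet description of $\Pq$ (Proposition~\ref{prop:ineqs}) and checks that each element of $\sA'$ satisfies all the facet inequalities. Equality of cardinalities then forces $\sA'=\Pq\cap\Z^d$.

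Your approach is a direct, self-contained enumeration. You parametrize lattice points by the ``height'' $m=1-\sum_j p_j$, translate the barycentric constraints to integer floor bounds $p_j\geq -a$ or $-b$, and reduce to counting nonnegative solutions of $\sum u_j+\sum v_j=S(m)$. The decisive computation is that, writing $m=a(1+r_1x_1)+\alpha$ and $a+\alpha=cr_1+\beta$, one has $S=1-\beta-c$, so $S\geq 0$ forces $(c,\beta)\in\{(0,0),(0,1),(1,0)\}$; this is exactly the ``key observation'' you identify for $a\geq 2$ (where $c=0$ is impossible), and the small cases $a\in\{0,1\}$ fall out the same way. Each admissible $m$ then yields either $S=1$ (the $d$ vertices $\bb'_j$, from $m=0$) or $S=0$ (a single point, matched explicitly to one of $\ba'_1,\ldots,\ba'_{r_1+3}$).

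The trade-offs: the paper's route is shorter given the external $h^*$-formula and yields the $\mathcal{H}$-description of $\Pq$ as a byproduct; your route avoids any dependence on \cite{BraunDavisSolusIDP} and is constructive, exhibiting directly which element of $\sA'$ each lattice point equals rather than concluding by a cardinality match. One small expository point: your list of admissible pairs $(a,b)$ is correct, but $(a,b)$ does not uniquely determine $m$ (the pair $(0,0)$ covers both $m=0$ and $m=1$); you handle this in the text, but it would be cleaner to phrase the case split directly in terms of $m$ (equivalently, in terms of $(c,\beta)$), which makes both existence and exhaustiveness transparent.
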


\begin{theorem}
There exists a lexicographic squarefree initial ideal of the toric ideal associated with $\sA'$.
\label{thm:main}
\end{theorem}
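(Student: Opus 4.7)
The plan is to exhibit an explicit finite set $\sG$ of binomials in the toric ideal $I_{\sA'}$, fix a lexicographic term order on $k[y_1,\ldots,y_{r_1+3},z_1,\ldots,z_d]$ (with $y_i$ indexing the lifted $\ba'_i$ and $z_j$ indexing the lifted $\bb'_j$), directly check that every leading monomial of an element of $\sG$ is squarefree, and then verify that $\sG$ is a Gr\"obner basis via Buchberger's criterion. Since Theorem~\ref{thm:latticepoints} identifies $\sA'$ with $\Pq\cap\Z^d$, this yields a squarefree initial ideal $\inG=\mathrm{in}_{<_{\mathrm{lex}}}(I_{\sA'})$, as required.

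I would first collect the natural syzygies. The defining identity $\ba'_i=(r_1-i+1)\ba'_{r_1+1}+\ba'_{r_1+2}$ immediately produces the Hankel family
\[
y_i y_j - y_{i'} y_{j'} \in I_{\sA'} \quad\text{whenever } i+j=i'+j' \text{ with } i,j,i',j'\in\{1,\ldots,r_1\},
\]
the decomposition binomial $y_{r_1}y_{r_1+3} - y_{r_1+1}y_{r_1+2}$ from $\ba'_{r_1}=\ba'_{r_1+1}+\ba'_{r_1+2}$ together with $\ba'_{r_1+3}=\mathbf{0}$, and the shift binomials $y_i y_{r_1+1} - y_{i-1} y_{r_1+3}$ for $2\leq i\leq r_1$ from $\ba'_i+\ba'_{r_1+1}=\ba'_{i-1}+\ba'_{r_1+3}$. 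To capture the $z$-variables I would add ``mixed'' binomials of the form $y_i z_{j_1}\cdots z_{j_s}-y_{r_1+3}^N$ arising from minimal nonnegative integer syzygies $\ba'_i + \sum_k n_k \bb'_{j_k} = \mathbf{0}$; by choosing the starting $y$-index carefully (substituting with a Hankel-equivalent vertex if necessary), one arranges the support on the $z$-side to be squarefree. Fixing the order $y_1>y_2>\cdots>y_{r_1+3}>z_1>\cdots>z_d$, the Hankel leading terms are $y_i y_j$ with $i\neq j$, the shift leading terms are $y_{i-1}y_{r_1+3}$, and the mixed leading terms are $y_i z_{j_1}\cdots z_{j_s}$ with pairwise distinct $z$-indices; all of these are squarefree.

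The principal obstacle is verifying that $\sG$ is a Gr\"obner basis, which reduces to checking that every S-pair of elements of $\sG$ reduces to zero modulo $\sG$. The S-pairs split into three regimes. Hankel-versus-Hankel reductions follow from the classical rational-normal-scroll calculus; mixed-versus-mixed reductions are forced coordinatewise by the fact that $\bb'_j=\be_{d-j+1}$ forms a $\Z$-basis, so any overlap in the $z$-support is resolved by reading off coordinates. The delicate regime is Hankel-versus-mixed, where an S-pair must be resolved using the shift identity $\ba'_i+\ba'_{r_1+1}=\ba'_{i-1}+\ba'_{r_1+3}$ in combination with the reflexivity arithmetic $1+r_1 x_1 = r_2$ coming from the classification of~\cite{BraunDavisSolusIDP}. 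The ordering of the $z_j$'s relative to the support of $\ba'_{r_1+1}$ must be chosen so that each such S-pair telescopes, through repeated applications of the shift relation, back into a binomial already in $\sG$. Once this case analysis is complete, the squarefreeness of the leading terms established above forces $\inG$ to be squarefree, proving Theorem~\ref{thm:main}.
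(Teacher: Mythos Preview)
Your plan has a genuine gap in the construction of $\sG$ itself, before one ever reaches the S-pair analysis.

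First, the mixed binomials you propose do not exist in the form you describe. A relation $\ba'_i+\sum_k \bb'_{j_k}=\mathbf 0$ with \emph{pairwise distinct} $j_k$ forces $\ba'_i\in\{-1,0\}^d$, which fails for every $i\le r_1+1$ (for instance $\ba'_{r_1}$ already has last coordinates $-(1+x_1)$, and $\ba'_1=-\bq$ has entries $-r_1$ and $-(1+r_1x_1)$). ``Substituting with a Hankel-equivalent vertex'' does not repair this: any $\Z_{\ge 0}$-combination of the $\ba'_i$ with $i\le r_1$ still has large negative coordinates. The actual squarefree leading terms in this family look like $y_i\,z_{j_1}\cdots z_{j_s}$ with the $z$-block equal to \emph{all} of $z_1,\dots,z_{r_1-1}$ or \emph{all} of $z_{r_1},\dots,z_d$, and the trailing side is not a pure power of the origin variable but a product such as $y_{r_1+1}^{\,r_1-k}y_{r_1+3}^{\,k}$ or $y_{r_1}^{\,k}y_{r_1+2}^{\,x_1+1-k}$. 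Your $\sG$ is missing these, as well as the quadratics with leading term $y_i y_{r_1+2}$ for $i\le r_1$ (these do lie in the initial ideal; they are not generated by your Hankel family, your shift binomials, or your single decomposition binomial).

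Second, even with the correct $\sG$, the S-pair strategy you outline is not the one that actually goes through cleanly. The paper does not use Buchberger's criterion at all. Instead it uses the standard toric criterion that $\sG$ is a Gr\"obner basis if and only if $\pi$ is injective on monomials outside $\langle\mathrm{in}_{<}(\sG)\rangle$, and the whole argument is organized around a structural lemma: any monomial $f\notin\mathrm{in}_{<}(\sG)$ has at most three $y$-variables in its support, and those are constrained to a short list of patterns (essentially $\{m,m+1,r_1+1\}$, $\{r_1,r_1+1,r_1+2\}$, or a subset of $\{r_1+1,r_1+2,r_1+3\}$). This bound is what makes the subsequent case analysis finite and tractable; your sketch has no analogue of it, and the ``telescoping via the shift relation'' you invoke for Hankel-versus-mixed S-pairs is not enough to handle, e.g., overlaps between two mixed binomials whose $y$-indices differ.
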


\begin{corollary}
The convex polytope $\Pq = \conv{\sA'}$ admits a regular unimodular triangulation induced by the lexicographic term order $<_{lex}$.
\label{cor:main}
\end{corollary}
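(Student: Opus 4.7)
The plan is to derive the corollary as an immediate consequence of Theorems \ref{thm:latticepoints} and \ref{thm:main}, via the standard correspondence between Gröbner bases of toric ideals and regular triangulations of point configurations. More precisely, I would appeal to the foundational results of Sturmfels relating initial ideals of $I_{\sA'}$ to regular subdivisions of $\conv{\sA'}$: for any term order $<$, the radical of the initial ideal $\mathrm{in}_{<}(I_{\sA'})$ is the Stanley--Reisner ideal of the regular triangulation $\Delta_{<}(\sA')$ of $\conv{\sA'}$ induced by the weight vector realizing $<$, and $\Delta_{<}(\sA')$ is unimodular precisely when $\mathrm{in}_{<}(I_{\sA'})$ is squarefree (so that taking the radical is unnecessary).

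First, by Theorem \ref{thm:latticepoints}, every lattice point of $\Pq$ lies in $\sA'$, and conversely every element of $\sA'$ lies in $\Pq$ by construction. Thus $\conv{\sA'} = \Pq$ and the point configuration $\sA'$ is exactly $\Pq \cap \Z^d$, so any triangulation of $\conv{\sA'}$ using only vertices from $\sA'$ is automatically a lattice triangulation of $\Pq$ that uses every lattice point as a vertex.

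Next, by Theorem \ref{thm:main}, there is a lexicographic term order $<_{lex}$ under which $\mathrm{in}_{<_{lex}}(I_{\sA'})$ is a squarefree monomial ideal. Applying Sturmfels' correspondence, the associated regular triangulation $\Delta_{<_{lex}}(\sA')$ of $\conv{\sA'} = \Pq$ is unimodular, which is precisely the content of the corollary.

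The only minor care needed is to confirm that the hypotheses of Sturmfels' theorem apply in our setting; in particular, $\sA'$ should be viewed as a homogeneous configuration in $\Z^{d+1}$ by appending a $1$ to each point (since $\Pq$ is a lattice polytope living in the affine hyperplane at height $1$ of the cone over $\Pq$), so that $I_{\sA'}$ is a genuine toric ideal whose initial ideal theory matches the triangulations of $\Pq$ with vertices in $\sA'$. No substantial obstacle arises here: the whole point is that Theorems \ref{thm:latticepoints} and \ref{thm:main} together are set up to invoke this well-known dictionary, so the corollary follows in a few lines.
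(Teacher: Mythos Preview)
Your proposal is correct and follows essentially the same route as the paper: the authors deduce Corollary~\ref{cor:main} directly from Theorem~\ref{thm:main} (in its restated form, Theorem~\ref{thm:basis}) by invoking Sturmfels' result \cite[Corollary~8.9]{sturmfels} that a squarefree initial ideal of the toric ideal yields a regular unimodular triangulation, with the homogenized configuration $\sA$ playing exactly the role you describe. Your explicit mention of Theorem~\ref{thm:latticepoints} to justify $\conv{\sA'}=\Pq$ is a reasonable elaboration, though the paper treats this identification as already established.
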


%%%%%%%%%%%%%%%%%%
\section{Proof of Theorem~\ref{thm:latticepoints}}\label{sec:latticepoints}

First, note that by \cite[Theorem 2.2]{BraunDavisSolusIDP}, we know the \emph{Ehrhart $h^{*}$-polynomial} of $\Pq$, denoted $h^{*}(\Pq; z):= h_0^{*} + h_1^{*}z + \cdots + h_d^{*}z^d$, is given by
\begin{align*}
    h^{*}(\Pq; z) = \sum_{b=0}^{r_1(x_1r_1+1)-1} z^{w(\bq,b)},
\end{align*}
where
\begin{align*}
    w(\bq,b):= b - x_1\left\lfloor \frac{b}{1+x_1r_1} \right\rfloor - (r_1-1) \left\lfloor \frac{b}{r_1} \right\rfloor.
\end{align*}
It is well known (see, e.g., \cite{beck2007computing}) that the coefficient $h_1^{*}$ is given by the formula
\[
h_1^{*} = \abs{\Pq \cap \Z^d} - (\dim \Pq + 1) \, . 
\]
Thus, to compute $h_1^{*}$, we must determine all $b$ for which $w(\bq,b) = 1$.
Since $0\leq b \leq r_1(x_1r_1+1)-1$, the division algorithm allows us to write $b = \alpha(1+x_1r_1)+\beta$, where $0\leq \alpha < r_1$ and $0\leq \beta < 1+x_1r_1$. Hence,
\begin{align*}
    w(\bq,b) &= w(\bq, \alpha(1+x_1r_1)+\beta) \\
    &= \alpha(1+x_1r_1) + \beta - x_1\left\lfloor \frac{\alpha(1+x_1r_1)+\beta}{1+x_1r_1} \right\rfloor - (r_1-1)\left\lfloor \frac{\alpha(1+x_1r_1)+\beta}{r_1} \right\rfloor \\
    &= \alpha(1+x_1r_1) + \beta - \alpha x_1 - (r_1-1)\left(\alpha x_1 + \left\lfloor \frac{\alpha+\beta}{r_1} \right\rfloor \right) \\
    &= \alpha + \beta - (r_1-1)\left\lfloor \frac{\alpha + \beta}{r_1}\right\rfloor.
\end{align*}
Therefore, the equation $w(\bq,b)=1$ becomes 
\begin{align*}
    \alpha + \beta - (r_1-1)\left\lfloor \frac{\alpha + \beta}{r_1} \right\rfloor = 1 \quad \Longleftrightarrow \quad  \alpha+\beta = 1 + (r_1-1)\left\lfloor \frac{\alpha + \beta}{r_1} \right\rfloor.
\end{align*}
Now, let $\ell = \left\lfloor \frac{\alpha + \beta}{r_1} \right\rfloor$.
By the previous equation, $\alpha + \beta = 1+(r_1-1)\ell$.
Substituting this equivalent expression for $\alpha + \beta$ into both sides of the previous equation, it follows that solving $w(\bq,b)=1$ is equivalent to finding all pairs $(\alpha,\beta)$ such that
\begin{align*}
    1 + (r_1-1)\ell &= 1 + (r_1-1)\left\lfloor \frac{1+(r_1-1)\ell}{r_1} \right\rfloor \\
    &= 1 + (r_1-1)\left(\ell + \left\lfloor \frac{1-\ell}{r_1} \right\rfloor \right).
\end{align*}
Rearranging this equation yields
\begin{align*}
    (r_1-1)\left\lfloor \frac{1-\ell}{r_1} \right\rfloor = 0.
\end{align*}
Therefore, since $r_1 >1$, this implies 
\begin{align*}
    \left\lfloor \frac{1-\ell}{r_1} \right\rfloor = 0 \quad \Longrightarrow \quad \ell = \begin{cases} 0 \\ 1 \end{cases} 
    \Longrightarrow \quad \alpha+\beta = \begin{cases} 1 \\ r_1 \end{cases}.
\end{align*}
If $\alpha + \beta = 1$, then $(\alpha,\beta) = (1,0)$ or $(\alpha,\beta) = (0,1)$. Otherwise, in the case that $\alpha+\beta = r_1$, there are $r_1$ possible pairs $(\alpha,\beta)$ where $\alpha \in \{0,\ldots,r_1-1\}$ and $\beta = r_1 - \alpha$. Thus,
\begin{align*}
    h_1^{*} = \abs{\{b \, : \, w(\bq,b) = 1\}} = r_1+2.
\end{align*}
Consequently, the aforementioned formula for $h_1^{*}$ implies
\begin{align*}
    \abs{\Pq \cap \Z^d} = r_1 + d +3,
\end{align*}
that is, there are $r_1+d+3$ lattice points in $\Pq$.

\begin{prop}
For $\bt = (t_1,\ldots,t_d)\in \R^d$, define
\begin{align*}
\lambda_k(\bt) := 
\begin{cases}
    \ds\;  \sum_{\substack{j=1 \\ j\neq k}}^d t_j - x_1r_1t_k, &\text{ if } 1\leq k \leq x_1 \\
    \ds\; \sum_{\substack{j=1 \\ j\neq k}}^d t_j - (r_1 - 1)t_k, &\text{ if } x_1+1\leq k \leq d \\
    \ds\; \sum_{j=1}^{d} t_j, &\text{ if } k=d+1 \, .
\end{cases}
\end{align*} 
An irredundant $\mathcal{H}$-description of $\Pq$ is given by $\lambda_k(\bt) \leq 1$ for all $1\leq k\leq d+1$. 
%In other words, the $d+1$ inequalities given by $\lambda_k(\bt)\leq 1$ constitute the facet-defining inequalities for $\Pq$.
\label{prop:ineqs}
\end{prop}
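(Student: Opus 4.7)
The plan is to verify directly that each of the $d+1$ proposed inequalities cuts out a facet of $\Pq$. Since $\Pq$ is a $d$-dimensional lattice simplex with vertices $\be_1,\ldots,\be_d,-\bq$, it has exactly $d+1$ facets, each the convex hull of all but one vertex, and any irredundant $\mathcal{H}$-description consists of exactly $d+1$ halfspaces. It therefore suffices to show, for each $k\in\{1,\ldots,d+1\}$, that $\lambda_k$ takes the value $1$ on $d$ of the $d+1$ vertices and a strictly smaller value on the remaining vertex; then $\{\lambda_k(\bt)\le 1\}$ supports a distinct facet for each $k$, and together the inequalities define $\Pq$ irredundantly.

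The first case is $k=d+1$: the functional $\lambda_{d+1}(\bt)=\sum_j t_j$ evaluates to $1$ at each $\be_j$ and to $-\sum_j q_j = 1-N(\bq)<1$ at $-\bq$, so $\lambda_{d+1}(\bt)\le 1$ is the facet opposite the vertex $-\bq$. For $1\le k\le d$, the defining formula instantly gives $\lambda_k(\be_j)=1$ for all $j\ne k$, while $\lambda_k(\be_k)$ equals $-x_1 r_1$ if $k\le x_1$ and $-(r_1-1)$ if $k>x_1$, both strictly less than $1$. The only nontrivial check is the value $\lambda_k(-\bq)$.

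For this last computation, I would use the identity $N(\bq)=1+x_1 r_1+(r_1-1)(1+r_1 x_1)=r_1(1+r_1 x_1)$, which is immediate from the form $\bq=(r_1^{x_1},(1+r_1 x_1)^{r_1-1})$. Writing $\lambda_k(-\bq)=-\sum_{j\ne k}q_j + c_k q_k$ with $c_k=x_1 r_1$ (respectively $c_k=r_1-1$) when $k\le x_1$ (respectively $k>x_1$), and substituting $\sum_{j\ne k}q_j = N(\bq)-1-q_k$, one obtains $\lambda_k(-\bq)=1-N(\bq)+(1+c_k)q_k$. In the first case $q_k=r_1$ and $1+c_k=1+x_1 r_1$, so $(1+c_k)q_k = r_1(1+r_1 x_1)=N(\bq)$; in the second case $q_k=1+r_1 x_1$ and $1+c_k=r_1$, so again $(1+c_k)q_k=N(\bq)$. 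Either way $\lambda_k(-\bq)=1$, as needed.

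Combining these calculations, each $\lambda_k(\bt)\le 1$ is a valid inequality satisfied by every vertex, and is tight on exactly the $d$ vertices defining one facet; in particular the $d+1$ hyperplanes $\{\lambda_k=1\}$ are pairwise distinct because they are tight on distinct subsets of vertices. This exhausts the facets of the $d$-simplex $\Pq$ and completes the irredundant $\mathcal{H}$-description. The only obstacle is the bookkeeping for $\lambda_k(-\bq)$, which collapses cleanly once $N(\bq)$ is recognized in the factored form $r_1(1+r_1 x_1)$.
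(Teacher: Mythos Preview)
Your proof is correct and follows essentially the same approach as the paper: checking that each vertex of the simplex satisfies exactly $d$ of the $d+1$ inequalities with equality, so that each hyperplane $\{\lambda_k=1\}$ supports a distinct facet. The only difference is that you spell out the verification $\lambda_k(-\bq)=1$ explicitly via the factorization $N(\bq)=r_1(1+r_1x_1)$, whereas the paper simply asserts it.
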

\begin{proof}
Observe that for all $1\leq j \leq d$, $\be_j$ satisfies all of the given inequalities tightly except when $k = j$ (i.e., $\lambda_k(\be_j) = 1$ for all $k\neq j$ and $\lambda_j(\be_j) < 1$). 
Moreover, $-\bq$ satisfies the first $d$ inequalities tightly (i.e., $\lambda_k(-\bq) = 1$ for all $1\leq k \leq d$), but not $\sum_j t_j \leq 1$. 
Thus, as each vertex of the simplex $\Pq$ satisfies exactly $d$ of the given inequalities with equality, the inequalities necessarily constitute an $\mathcal{H}$-description of $\Pq$.
\end{proof}

\begin{proof}[Proof of Theorem \ref{thm:latticepoints}]
To begin, observe that $\abs{\sA'} = r_1+d+3 = \abs{\Pq \cap \Z^d}$. Therefore, as each element in $\sA'$ is an integer vector, it suffices to show that each point satisfies the inequalities in Proposition \ref{prop:ineqs}. To this end, let $\lambda_k$ be defined as in Proposition \ref{prop:ineqs};
we evaluate each vector in $\sA'$ on $\lambda_k$. 
For each $1\leq i \leq r_1$, note that
\begin{align*}
    \ba'_i = (r_1-i+1)\ba'_{r_1+1} + \ba'_{r_1+2} = \left((-(r_1-i+1))^{x_1},((-(1+(r_1-i+1)x_1))^{r_1-1}\right).
\end{align*}
Therefore, we have that
\begin{align*}
    \lambda_k(\ba'_1) = 1 \text{ if } 1\leq k \leq d \quad \text{and} \quad \lambda_{d+1}(\ba'_1) < 1,
\end{align*}
and for each $i \in \{2,\ldots, r_1\}\cup\{r_1+2\}$,
\begin{align*}
    \lambda_k(\ba'_i) = 1 \text{ if } x_1+1 \leq k \leq d \quad \text{and} \quad \lambda_k(\ba'_i) < 1 \text{ otherwise}.
\end{align*}
Also,
\begin{align*}
    \lambda_k(\ba'_{r_1+1}) = 1 \text{ if } 1 \leq k \leq x_1 \quad \text{and} \quad \lambda_k(\ba'_{r_1+1}) < 1 \text{ otherwise},
\end{align*}
and
\begin{align*}
    \lambda_k(\ba'_{r_1+3}) < 1 \text{ for all } 1\leq k \leq d+1.
\end{align*}
Lastly, for all $1\leq j \leq d$,
\begin{align*}
    \lambda_k(\bb'_j) = 1 \text{ if } k\neq d-j+1 \quad \text{and} \quad \lambda_k(\bb'_j) < 1 \text{ if } k = d-j+1.
\end{align*}
Thus, $\sA' \subseteq \Pq \cap \Z^d$, and the result follows.  
\end{proof}

%%%%%%%%%%%%%%%%%%%%%%%%%%%%%%%%%%%%%%%%%%%%%%%%%%%%

\section{Proof of Theorem~\ref{thm:main}}

We next seek to prove the existence of a regular unimodular triangulation of the convex hull of these points.
Given a field $K$, there are natural parallels between properties of lattice polytopes and algebraic objects such as semigroup algebras, toric varieties, and monomial ideals.
The following one-to-one correspondence between lattice points and Laurent monomials plays a central role:
\begin{align*}
    \ba' = (a_1,\ldots,a_d)\in \Z^d \quad \longleftrightarrow \quad \bt^{\ba'}:= t_1^{a_1}\cdots t_d^{a_d}\in K[t_1^{\pm 1},\ldots,t_d^{\pm 1}].
\end{align*}
For details regarding the significance of this correspondence, see \cite[Chapter 8]{sturmfels}. Furthermore, for all notation related to combinatorical commutative algebra, we refer the reader to \cite{coxlittleoshea}.

Let $K$ be a field, and define $\sA = (\ba_1,\ldots,\ba_{r_1+3}, \bb_1, \ldots, \bb_d)\subset \Z^{(d+1)\times (r_1+d+3)}$ to be the homogenization of $\sA'$ where $\ba_i = (\ba'_i, 1)$ and $\bb_j = (\bb'_j,1)$; that is, $\sA$ is the matrix associated with the point configuration consisting of all vectors in $\sA'$ lifted to height 1. 
(Note that we can view the columns of $\sA$ as the intersection of $\Z^{d+1}$ with the degree $1$ slice of the polyhedral cone over $\Pq$.)
Let $K[\sA] := K[\{z_1,\ldots,z_{r_1+3},y_1,\ldots, y_d\}]$ be the polynomial ring associated with the columns of $\sA$ in $r_1+d+3$ variables over $K$.
Moreover, let $\sM(K[\sA])$ denote the set of monomials contained in $K[\sA]$, and let $\sR_K[\sA]$ be the $K$-subalgebra of the Laurent polynomial ring $K[\bt^{\pm 1}]:= K[t_1^{\pm 1},\ldots,t_{d+1}^{\pm 1}]$ generated by all monomials $\bt^{\ba}$ with $\ba\in \sA$, where $\bt^{\ba} = t_1^{a_1}\cdots t_{d+1}^{a_{d+1}}$ if $\ba = (a_1,\ldots,a_{d+1})$. 
The toric ideal $I_{\sA}$ is the kernel of the surjective ring homomorphism $\pi: K[\sA] \to \sR_K[\sA]$ defined by
\begin{align*}
	\pi(z_i) &= \bt^{\ba_i}, \, \text{ for } 1 \leq i \leq r_1+3 \\
	\pi(y_j) &= \bt^{\bb_j}, \text{ for } 1 \leq j \leq d.
\end{align*}
A generating set for $I_{\sA}$ is given by the set of all homogeneous binomials $f-g$ with $\pi(f)=\pi(g)$ and $f,g\in \sM(K[\sA])$, see \cite[Lemma 4.1]{sturmfels}.
We fix the term order $<_{lex}$ on $K[\sA]$ induced by the ordering of the variables
\begin{align*}
	z_1 > z_2 > \cdots > z_{r_1+3} > y_1 > y_2 > \cdots > y_d. 
\end{align*}
Moreover, for $f = z_1^{\gamma_1}\cdots z_{r_1+3}^{\gamma_{r_1+3}}y_1^{\delta_1}\cdots y_d^{\delta_d} \in \sM(K[\sA])$, we introduce the notation 
\[
\zsupp{f}:= \{i\in \{1,\ldots ,r_1+3\} \, : \, \gamma_i > 0 \} \, .
\]

Given this setup, we restate Corollary~\ref{cor:main} and Theorem~\ref{thm:main}. 
The proof of Corollary~\ref{cor:restated} follows immediately from Theorem~\ref{thm:basis} due to the existence of a squarefree initial ideal of the toric ideal $I_{\sA}$~\cite[Corollary 8.9]{sturmfels}.

\begin{corollary}[Restatement of Corollary \ref{cor:main}]
The regular triangulation of $\sA$ induced by the term order $<_{lex}$ as specified above is unimodular. 
\label{cor:restated}
\end{corollary}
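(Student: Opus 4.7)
My plan is to derive the corollary as an immediate consequence of Theorem~\ref{thm:main} via a classical fact of Sturmfels. Specifically, \cite[Corollary 8.9]{sturmfels} asserts that whenever the toric ideal of a lattice point configuration admits an initial ideal (with respect to some term order $<$) generated by squarefree monomials, the regular triangulation of the configuration induced by $<$ is automatically unimodular. Since Theorem~\ref{thm:latticepoints} identifies $\sA'$ with $\Pq \cap \Z^d$, the homogenization $\sA$ records exactly the lattice points of $\Pq$ at height one in the cone over $\Pq$; consequently, Theorem~\ref{thm:main} supplying the required lexicographic squarefree initial ideal of $I_{\sA}$ immediately yields the unimodular regular triangulation of $\Pq$ claimed in the corollary.

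All of the genuine difficulty is therefore in proving Theorem~\ref{thm:main}. My approach would be to exhibit an explicit finite set $\sG \subset I_{\sA}$ of binomials, show that $\sG$ is a Gr\"obner basis of $I_{\sA}$ with respect to $<_{lex}$, and verify that every leading monomial of $\sG$ is squarefree. Natural candidate binomials come from three kinds of integer relations among the columns of $\sA$: (a) quadratic identities $z_i z_j - z_k z_l$ among the collinear sub-family $\ba_1,\ldots,\ba_{r_1}$, which hold whenever $i+j=k+l$ because $\ba'_i = (r_1-i+1)\ba'_{r_1+1} + \ba'_{r_1+2}$; (b) \emph{laddering} identities such as $z_i z_{r_1+3} - z_{i+1} z_{r_1+1}$ that connect this family to $\ba_{r_1+1}$ and the origin $\ba_{r_1+3}$; and (c) \emph{mixed} identities relating products of $z$'s to products of $y$'s, encoding the reflexive $\mathcal{H}$-description of Proposition~\ref{prop:ineqs}. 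Under the variable order $z_1 > \cdots > z_{r_1+3} > y_1 > \cdots > y_d$, the leading monomial of each such binomial can be arranged as a squarefree product of $z$'s with the smallest indices available.

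The main obstacle will be verifying Buchberger's S-pair criterion for $\sG$. The three families interact in nontrivial ways --- a $z$-only quadratic S-pair can be reduced by a laddering relation, and either kind can then cross into a mixed $z$-$y$ relation --- so the reduction tree for S-polynomials is combinatorially tangled. I expect the cleanest organization to be a stratification by the $\zsupp{\cdot}$-pattern of the leading monomials: handle S-pairs within a fixed $\zsupp{\cdot}$-class uniformly, and then use the laddering relations to push any class involving small indices toward the ``canonical'' one $\{r_1+1, r_1+2, r_1+3\}$ before tackling cross-class S-pairs. Throughout, one must simultaneously track that no reduction introduces a repeated variable in any leading monomial, which imposes a structural constraint on the allowable generators of $\sG$.
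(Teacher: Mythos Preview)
Your derivation of the corollary itself is exactly the paper's: invoke \cite[Corollary~8.9]{sturmfels} on the squarefree lexicographic initial ideal supplied by Theorem~\ref{thm:main} (restated as Theorem~\ref{thm:basis}).

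Where you diverge is in the sketched approach to Theorem~\ref{thm:main}, which is where you correctly locate all the content. You propose verifying Buchberger's S-pair criterion; the paper instead uses the equivalent criterion recorded as Lemma~\ref{lem:grobner}: $\sG$ is a Gr\"obner basis if and only if $\pi$ is injective on monomials outside $\langle in_{<}(\sG)\rangle$. The paper first proves (Lemma~\ref{lem:3support}) that any such standard monomial has $\abs{\zsupp{\cdot}}\le 3$ with a short list of admissible index patterns, and then runs a case analysis on the pair of patterns to show $\pi(f)=\pi(g)$ with $f\neq g$ is impossible. Your $\zsupp{\cdot}$-stratification idea is precisely the organizing principle the paper uses, just applied to standard monomials rather than to S-pair reductions. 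One substantive point: your candidate families (a)--(c) are stated as quadratics, but the paper's explicit $\sG$ contains higher-degree binomials of the form $z_i\prod y_\ell - (\text{monomial in }z)$ (families~\eqref{eq:gb2}--\eqref{eq:gb5}); quadratics alone will not generate the initial ideal here, so an S-pair approach would need these as well before any reduction argument could close.
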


\begin{theorem}[Restatement of Theorem \ref{thm:main}]
Let $B$ be the set of all $(i,j)\in \N^2$ satisfying the following conditions:
\begin{enumerate}[label={(\roman*)}]
	\item $j-i \geq 2$
	\item $1\leq i \leq r_1$
	\item $j \leq r_1+3$
	\item $j \neq r_1+1$
\end{enumerate}
Given $(i,j)\in B$, define $(k,\ell)$ as follows:
\begin{align*}
\begin{array}{l l}
\ds k = \left\lfloor \frac{i+j}{2} \right\rfloor, \ell = \left\lceil \frac{i+j}{2} \right\rceil & \text{ if } j < r_1+1 \\[2ex]
\ds k = \left\lfloor \frac{i+j-1}{2} \right\rfloor, \ell = \left\lceil \frac{i+j-1}{2} \right\rceil & \text{ if } j = r_1+2\\[2ex]
k = i+1, \, \ell = r_1+1 & \text{ if } j = r_1+3, i\neq r_1 \\[1ex]
k = r_1+1, \, \ell = r_1+2 & \text{ if } j = r_1+3, i=r_1.
\end{array}
\end{align*}
If $x_1 \geq r_1-2$, then the set of binomials $\sG$ given by
\begin{alignat}{2}
&z_i z_j - z_k z_{\ell}, && \quad (i,j)\in B \label{eq:gb1}\\
&\ds z_{k+1}\prod_{\ell=1}^{r_1 - 1}y_{\ell} - z_{r_1 + 1}^{r_1 - k}z_{r_1 + 3}^k, && \quad 0\leq k \leq r_1 - 1 \label{eq:gb2}\\ 
&\ds z_{r_1 - k}\prod_{\ell=r_1}^{d}y_{\ell} - z_{r_1}^{k}z_{r_1 + 2}^{x_1+1 - k}, && \quad 0\leq k \leq r_1 - 1 \label{eq:gb3}\\
&\ds z_{r_1+2}\prod_{\ell=1}^{r_1 - 1}y_{\ell} - z_{r_1 + 3}^{r_1}, &&  \label{eq:gb4}\\ 
&\ds z_{r_1 + 1}\prod_{\ell=r_1}^{d}y_{\ell} - z_{r_1 + 2}^{x_1}z_{r_1 + 3} && \label{eq:gb5}
\end{alignat}
is a Gr\"{o}bner basis of $I_{\sA}$ with respect to lexicographic order $<_{lex}$. 
In the case that $x_1<r_1-2$, replace (3) above with
\begin{align}
    \begin{cases}
        \ds z_{r_1 - k}\prod_{\ell=r_1}^{d}y_{\ell} - z_{r_1}^{k}z_{r_1 + 2}^{x_1+1 - k}, \quad & 0\leq k \leq x_1+1 \\
        \ds z_{r_1 - k}\prod_{\ell=r_1}^{d}y_{\ell} - z_{r_1-1}^{k-x_1-1}z_{r_1}^{2x_1+2 - k}, \quad & x_1+2\leq k \leq r_1-1.
    \end{cases}
    \tag{3*}
\end{align}
\label{thm:basis}
\end{theorem}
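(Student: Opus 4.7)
My plan is to prove Theorem~\ref{thm:basis} in two stages. First I would verify that every binomial in $\sG$ lies in $I_\sA$ and identify its leading term under $<_{lex}$; second, I would apply Buchberger's S-pair criterion to conclude that $\sG$ is a Gr\"obner basis. Since the leading terms of all listed binomials are manifestly squarefree, Corollary~\ref{cor:restated} is then immediate from \cite[Corollary 8.9]{sturmfels}.

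For the first stage, the verifications reduce to elementary exponent-vector computations using $\ba'_i = (r_1-i+1)\ba'_{r_1+1} + \ba'_{r_1+2}$ for $1 \leq i \leq r_1$, together with $\bb'_j = \be_{d - j + 1}$ and appropriate height corrections to pass from $\sA'$ to $\sA$. The identity for a family~(1) binomial $z_iz_j - z_kz_\ell$ with $j < r_1 + 1$ reduces to the arithmetic fact $k + \ell = i + j$, which is immediate from the floor/ceiling definition; the cases $j \in \{r_1+2, r_1+3\}$ are handled analogously, using that $\ba'_{r_1+2}$ contributes only to the last $r_1 - 1$ coordinates and $\ba'_{r_1+3} = \mathbf{0}$. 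For families (2)--(5), the identities follow by computing $\sum_{\ell=1}^{r_1 - 1} \bb'_\ell = \be_{x_1+1} + \cdots + \be_d$ and $\sum_{\ell=r_1}^{d} \bb'_\ell = \be_1 + \cdots + \be_{x_1}$ and matching components against the $z$-monomials. Under the specified lex order, the leading term of each binomial is the first monomial written.

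For the second stage, many S-pairs vanish for free: whenever $\gcd(\mathrm{in}(f), \mathrm{in}(g)) = 1$ the S-polynomial reduces to zero automatically, and since the $y$-products $\prod_{\ell=1}^{r_1 - 1} y_\ell$ and $\prod_{\ell=r_1}^{d} y_\ell$ involve disjoint variables, cross-family pairs whose $z$-supports are also disjoint are handled immediately. The substantive cases I would check are: (a) internal S-pairs of family~(1) sharing a common $z$-variable; (b) internal S-pairs within families~(2) or~(3) sharing the full $y$-product; (c) cross-pairs between families~(2) and~(4), and between~(3) and~(5), sharing the same $y$-product; (d) cross-pairs between family~(1) and families~(2)--(5), sharing a single $z$-variable. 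In each case I would write out the S-polynomial and produce an explicit reduction to zero using $\sG$.

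The main obstacle is expected to be case (d), where after cancelling a shared $z$-variable the intermediate monomial must be further rewritten using both the family~(1) rules and a family-(2)--(5) rule, and the terminal form must be shown to be independent of the reduction path. Confluence in case (a) should follow from the observation that the map $(i,j) \mapsto (k,\ell)$ in family~(1) preserves $i + j$ in the interior case and $i + j - 1$ when $j = r_1+2$, so iterated rewriting either stabilizes at the unique pair with $|i - j| \le 1$ or funnels into the boundary cases $j \in \{r_1+2, r_1+3\}$ where families~(2)--(5) take over; particular care is needed near the index $r_1+1$ that is excluded from $B$. A secondary obstacle is the variant (3*) required when $x_1 < r_1 - 2$: here family~(3) splits into two sub-ranges to avoid the negative exponent $x_1 + 1 - k$, and the S-pair analysis within and across the sub-ranges must be repeated with small adjustments to the binomial shapes.
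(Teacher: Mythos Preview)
Your strategy is genuinely different from the paper's. You propose to verify membership in $I_\sA$ and then run Buchberger's S-pair criterion, reducing each nontrivial S-polynomial to zero by explicit rewriting. The paper instead invokes the standard-monomial criterion (stated there as Lemma~\ref{lem:grobner}): $\sG$ is a Gr\"obner basis if and only if $\pi$ is injective on the set of monomials not lying in $\langle \mathrm{in}_{<_{lex}}(\sG)\rangle$. After a short structural lemma (Lemma~\ref{lem:3support}) pinning down which $z$-variables can appear in such a monomial, the paper assumes $\pi(f)=\pi(g)$ for two coprime standard monomials and derives a contradiction by a coordinate-by-coordinate analysis of the exponent vectors. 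No S-polynomials are ever formed. What the paper's route buys is that the case split is governed entirely by the small set $\zsupp{f},\zsupp{g}\subseteq\{m,m+1,r_1+1\}$ (or $\{r_1,r_1+1,r_1+2\}$, etc.), and each case is a linear-algebra computation rather than a search for a reduction sequence; in particular it is insensitive to the choice of trailing term, so the $(3)$ versus $(3^{*})$ distinction plays no role once leading terms are fixed. Your route is in principle valid, but your case~(d) is where most of the work hides: an S-pair between a family~(\ref{eq:gb1}) relation $z_iz_j-z_kz_\ell$ and, say, a family~(\ref{eq:gb3}) relation sharing $z_i$ produces a polynomial whose reduction may require several family~(\ref{eq:gb1}) steps before a family~(\ref{eq:gb3}) or (\ref{eq:gb5}) step closes it, and the ``confluence'' heuristic you sketch (that $(i,j)\mapsto(k,\ell)$ preserves $i+j$) is suggestive but is not itself a reduction-to-zero argument; you would still need to exhibit the chain for each pair, and to handle the transitions at $j\in\{r_1+1,r_1+2,r_1+3\}$ carefully. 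If you pursue Buchberger, expect the bookkeeping to be at least as heavy as the paper's case analysis; alternatively, switching to the injectivity criterion lets you trade reduction chains for linear equations.
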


Note that regardless of case (either $x_1\geq r_1 - 2$ or $x_1 < r_1-2$), the initial terms of the $k$-th binomial in (3) and (3*) are identical. 
Therefore, we need not worry about the relationship between $x_1$ and $r_1-2$.

To prove Theorem \ref{thm:basis}, we employ the following well known technique for proving a finite set of the toric ideal $I_{\sA}$ is a Gr\"{o}bner basis of $I_{\sA}$ (see, e.g., \cite[(0.1)]{hibiohsugi}).

\begin{lemma}
A finite set $\sG$ of $I_{\sA}$ is a Gr\"{o}bner basis of $I_{\sA}$ with respect to the monomial order $<$ if and only if $\{\pi(f) \, : \, f \in \sM(K[\sA]), f\notin in_{<}(\sG)\}$ is linearly independent over $K$; 
i.e., if and only if $\pi(f) \neq \pi(g)$ for all $f\notin in_{<}(\sG)$ and $g\notin in_{<}(\sG)$ with $f \neq g$.
\label{lem:grobner}
\end{lemma}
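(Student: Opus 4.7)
The plan is to invoke the standard correspondence between Gr\"obner bases and $K$-vector space bases of the quotient ring $K[\sA]/I_\sA$. Since $\sG \subseteq I_\sA$ is assumed, the containment $\langle in_{<}(\sG) \rangle \subseteq in_{<}(I_\sA)$ holds automatically, and $\sG$ is a Gr\"obner basis precisely when this containment is an equality.

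First I would set $\mathcal{S} := \{f \in \sM(K[\sA]) : f \notin in_{<}(\sG)\}$ and apply the division algorithm with respect to $\sG$ to show that every $h \in K[\sA]$ reduces to a $K$-linear combination of elements of $\mathcal{S}$. Because the reduction only subtracts $K[\sA]$-multiples of elements of $\sG \subseteq I_\sA$, this combination represents the same class as $h$ modulo $I_\sA$. Transporting along the ring isomorphism $K[\sA]/I_\sA \cong \sR_K[\sA]$ induced by $\pi$, it follows that $\{\pi(f) : f \in \mathcal{S}\}$ always spans $\sR_K[\sA]$ over $K$, regardless of whether $\sG$ is actually a Gr\"obner basis. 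The lemma therefore reduces to checking that $\sG$ is a Gr\"obner basis if and only if this spanning set is $K$-linearly independent.

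For the forward direction I would invoke Macaulay's theorem: when $\langle in_{<}(\sG)\rangle = in_{<}(I_\sA)$, the monomials not lying in $in_{<}(I_\sA)$ form a $K$-basis of $K[\sA]/I_\sA$, and under $\pi$ these become a $K$-basis of $\sR_K[\sA]$. For the converse I would argue contrapositively: if $\sG$ fails to be a Gr\"obner basis, pick $h \in I_\sA \setminus \{0\}$ with $in_{<}(h) \notin \langle in_{<}(\sG)\rangle$, and run the division algorithm on $h$ to obtain a nonzero remainder $r \in I_\sA$ that is a $K$-linear combination of elements of $\mathcal{S}$. Applying $\pi$ to $r$ yields a nontrivial $K$-linear dependence among $\{\pi(f) : f \in \mathcal{S}\}$. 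The final ``i.e.'' clause is then automatic: every $\pi(f)$ is a Laurent monomial in $K[\bt^{\pm 1}]$, and a set of distinct monomials in a Laurent polynomial ring is $K$-linearly independent, so linear independence of the $\pi$-images is simply the assertion that $\pi$ is injective on $\mathcal{S}$.

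I do not anticipate any real obstacle, as this is a well-known criterion in toric Gr\"obner basis theory essentially going back to Chapter 8 of \cite{sturmfels}. The one point that requires care is cleanly separating the two roles of $\mathcal{S}$: it always spans $K[\sA]/I_\sA$ under the mere hypothesis $\sG \subseteq I_\sA$, whereas its being a $K$-basis — equivalently, the $K$-linear independence of its $\pi$-images — is exactly the Gr\"obner basis condition.
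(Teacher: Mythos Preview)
Your argument is correct and is the standard proof of this criterion. However, there is nothing to compare against: the paper does not prove this lemma at all. It is stated with the preface ``we employ the following well known technique\ldots (see, e.g., \cite[(0.1)]{hibiohsugi})'' and then simply used as a black box in the proof of Theorem~\ref{thm:basis}. So your proposal supplies a proof where the paper only gives a citation; the content you wrote is accurate and appropriately invokes the division algorithm, Macaulay's theorem, and the linear independence of distinct Laurent monomials.
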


We will also require the following fact which provides an upper bound on the supported $z$ variables for any monomial outside the initial ideal generated by the binomials in Theorem \ref{thm:basis} with respect to $<_{lex}$. 

\begin{lemma}
Let $\sG$ be the set of binomials given in Theorem \ref{thm:basis}.
Suppose 
\begin{align*}
    f = z_1^{\gamma_1}\cdots z_{r_1+3}^{\gamma_{r_1+3}}y_1^{\delta_1}\cdots y_d^{\delta_d} \in \sM(K[\sA])
\end{align*} 
with $f\notin in_{<_{lex}}(\sG)$ and $\abs{\zsupp{f}} \geq 1$. Let $m$ denote the minimal index such that $z_m$ divides $f$. Then, $\abs{\zsupp{f}} \leq 3$ and we are restricted to the following possibilities:
\begin{enumerate}[label={(\arabic*)}]
	\item if $1\leq m \leq r_1-1$, then $\gamma_{m+1},\gamma_{r_1+1}\geq 0$ and \\ $\gamma_i = 0$ for all $i\in \{1,\ldots,r_1+3\} \setminus \{m, m+1, r_1+1\}$.
	\item if $m=r_1$, then $\gamma_{r_1+1},\gamma_{r_1+2}\geq 0$ and \\ $\gamma_i = 0$ for all $i\in \{1,\ldots,r_1-1\} \cup \{r_1+3\}$.
	\item if $m\in \{r_1+1,r_1+2,r_1+3\}$, then $\gamma_{i} = 0$ for all $i < m$ and \\ $\gamma_i \geq 0$ for all $i > m$. 
\end{enumerate}
\label{lem:3support}
\end{lemma}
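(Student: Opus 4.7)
The plan is to extract every constraint on $\zsupp(f)$ from the quadratic binomials in family~(1) of $\sG$, since each initial term arising in families~(2)--(5) contains a nontrivial product of $y$-variables and hence cannot by itself force a $\gamma_j$ to vanish. The minimality of $m$ immediately gives $\gamma_i=0$ for $i<m$ in all three cases, so the substantive task is to track which $\gamma_j$ with $j>m$ can be positive.

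The driving observation is that, under $<_{lex}$, the leading monomial of any family-(1) binomial $z_iz_j-z_kz_\ell$ is $z_iz_j$: an inspection of the four subcases defining $(k,\ell)$ shows $i<k$ in every case, so $z_i>z_k$ in the variable order and the exponent of $z_i$ on $z_iz_j$ strictly exceeds that on $z_kz_\ell$. Consequently, $(m,j)\in B$ implies $z_mz_j\in\inG$, and for $f\notin\inG$ with $z_m\mid f$ this forces $\gamma_j=0$. From this single criterion, the three cases of the lemma reduce to a direct enumeration of which $j>m$ satisfy conditions~(i)--(iv) of~$B$.

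Carrying out the enumeration: for $1\le m\le r_1-1$, the pairs $(m,j)$ with $j\in\{m+2,\dots,r_1\}\cup\{r_1+2,r_1+3\}$ lie in $B$, while $j=m+1$ fails~(i) and $j=r_1+1$ fails~(iv), so the support is confined to $\{m,m+1,r_1+1\}$. For $m=r_1$, $(r_1,r_1+3)\in B$ forces $\gamma_{r_1+3}=0$, $(r_1,r_1+1)$ is excluded by~(iv), and the boundary pair $(r_1,r_1+2)$ must be examined separately, leaving the support in $\{r_1,r_1+1,r_1+2\}$. For $m\ge r_1+1$, condition~(ii) forbids $i\ge r_1+1$, so no $(m,j)$ lies in $B$ at all and minimality alone gives $\zsupp(f)\subseteq\{m,\dots,r_1+3\}$. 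In every case the allowed support has cardinality at most three, so $|\zsupp(f)|\le 3$. The main obstacle I expect is the boundary bookkeeping at the pair $(r_1,r_1+2)$ in case~(2), where one has to confirm that no binomial in $\sG$ imposes the extra restriction $\gamma_{r_1+2}=0$ beyond what the lemma claims; the remainder of the argument is a routine check against the defining conditions of~$B$.
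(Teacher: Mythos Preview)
Your approach matches the paper's: both arguments reduce to checking which quadratic products $z_m z_j$ lie in $\inG$ by reading off the index set $B$ of family~(1), and the paper's proof is essentially the enumeration you describe. Two small points deserve correction, though neither undermines the overall argument.

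First, the assertion that $i<k$ ``in every case'' is false at the single pair $(i,j)=(r_1,r_1+2)$, where $k=\lfloor(2r_1+1)/2\rfloor=r_1=i$. You flag this pair for separate treatment later, so perhaps you already noticed the exception, but the blanket claim needs qualifying. Fortunately, the only pairs $(m,j)\in B$ you actually invoke to conclude $\gamma_j=0$ --- namely $(m,j)$ with $1\le m\le r_1-1$ and $j\in\{m+2,\dots,r_1\}\cup\{r_1+2,r_1+3\}$, together with $(r_1,r_1+3)$ --- all satisfy $i<k$, so the argument survives once the claim is restricted to those pairs.

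Second, your ``main obstacle'' is not an obstacle at all: the lemma makes only a one-sided containment claim. For $m=r_1$ it asserts $\gamma_i=0$ for $i\in\{1,\dots,r_1-1\}\cup\{r_1+3\}$; the clause ``$\gamma_{r_1+1},\gamma_{r_1+2}\ge0$'' is vacuous (exponents are always nonnegative) and merely signals which indices are \emph{unrestricted}. There is nothing to confirm about whether some binomial forces $\gamma_{r_1+2}=0$ --- once $\gamma_{r_1+3}=0$ is obtained from $(r_1,r_1+3)\in B$, case~(2) is complete. The paper's proof handles $m=r_1$ in exactly this way, simply recording that $z_{r_1}z_{r_1+1},z_{r_1}z_{r_1+2}\notin\inG$ while $z_{r_1}z_{r_1+3}\in\inG$.
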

\begin{proof}
Suppose $1\leq m \leq r_1-1$. Since $z_mz_{m+1}\notin \inG$ and $z_mz_{r_1+1}\notin \inG$, $z_{m+1}$ and $z_{r_1+1}$ possibly divide $f$. However, given the structure of $B$ as defined in Theorem \ref{thm:basis}, it follows that $z_mz_{r_1+2},z_mz_{r_1+3},z_mz_n \in \inG$ for all $n$ with $n > m+1, n \neq r_1+1$. Therefore, since $m$ is minimal, $\abs{\zsupp{f}} \leq 3$ and we precisely satisfy the conditions of Lemma~\ref{lem:3support}(1). \\
Now, suppose $m = r_1$. By the minimality of $m$, we need only consider indices greater than $r_1$. Observe that $z_{r_1}z_{r_1+1} \notin \inG$, $z_{r_1}z_{r_1+2} \notin \inG$, and $z_{r_1}z_{r_1+3} \in \inG$. Thus, we have that $\abs{\zsupp{f}} \leq 3$ and we end up in Lemma~\ref{lem:3support}(2). \\
Finally, for $m\in \{r_1+1,r_1+2,r_1+3\}$, minimality of $m$ immediately implies $\abs{\zsupp{f}} \leq 3$. To see that this case yields Lemma~\ref{lem:3support}(3), observe that $z_{m}z_{n} \notin \inG$ for $m,n\in \{r_1+1,r_1+2,r_1+3\}$ with $m\neq n$. 
\end{proof}

\begin{proof}[Proof of Theorem \ref{thm:basis}]
One easily checks that each binomial $h = m_1 - m_2 \in \sG$ is contained in $I_{\sA}$ by showing $\pi(m_1) = \pi(m_2)$. To show $\sG$ is a Gr\"{o}bner basis of $I_{\sA}$, we employ Lemma \ref{lem:grobner}. Suppose $f,g\in \sM(K[\sA])$ with $f\neq g$, $f\notin \inG$, and $g\notin \inG$. Write 
\begin{align*}
    f = z_1^{\alpha_1}\cdots z_{r_1+3}^{\alpha_{r_1+3}}y_1^{\beta_1}\cdots y_d^{\beta_d} \quad \text{and} \quad g = z_1^{\alpha'_1}\cdots z_{r_1+3}^{\alpha'_{r_1+3}}y_1^{\beta'_1}\cdots y_d^{\beta'_d},
\end{align*}
where $\alpha_i, \alpha'_i, \beta_j, \beta'_j \geq 0$. 
We may assume $f$ and $g$ are relatively prime (since otherwise, we could simply factor out the common variables and consider the images of the reduced monomials). 
Further assume to the contrary that $\pi(f) = \pi(g)$, and without loss of generality, assume $\abs{\zsupp{f}} \geq \abs{\zsupp{g}}$. 
For convenience, let $\bbf,\bbg \in \Z^{d+1}$ denote the exponent vectors associated with $\pi(f)$ and $\pi(g)$, respectively, and let $\bbf[k]$ (resp. $\bbg[k]$) denote the $k$-th entry of $\bbf$ (resp. $\bbg$). 
With this notation, observe that $\pi(f) = \pi(g)$ if and only if $\bbf[k] = \bbg[k]$ for all $1\leq k \leq d+1$.
Now, we verify a contradiction in each of the following cases.\\
% Case 1 %%%%%%%%%%%%%%%%%%%%%%%%%%%%%%%%%%%%%%%%%%%%%%%%%%%%%%%
\begin{case}
$\abs{\zsupp{g}} = 0$. By definition, it follows that $\alpha'_i = 0$ for all $1\leq i \leq r_1+3$. 
Therefore, we know that
\begin{align*}
    \bbg = \begin{pmatrix}\beta'_d, \ldots, \beta'_1, \sum_j \beta'_j \end{pmatrix}.
\end{align*}
    \begin{subcase} $\abs{\zsupp{f}} = 0$.
        Thus, 
        \begin{align*}
            \bbf = \begin{pmatrix}\beta_d, \ldots, \beta_1, \sum_j \beta_j \end{pmatrix}.
        \end{align*}
        Since $\pi(f)=\pi(g)$, this implies $\beta_j = \beta'_j$ for all $1\leq j \leq d$, and consequently, $f=g$, a contradiction.
    \end{subcase}
    \begin{subcase} $\abs{\zsupp{f}} \geq 1$.
        Let $m$ denote the minimal index such that $z_m$ divides $f$ (i.e., $\alpha_m > 0$ and $\alpha_i = 0$ for all $i < m$). 
        \begin{enumerate}[label={\textbf{(\alph*)}},leftmargin=.575in]
            \item Suppose $1\leq m \leq r_1+1$. 
            Since $z_my_{r_1}\cdots y_d\in \inG$ (by (\ref{eq:gb3}) and (\ref{eq:gb5})) and $f \notin \inG$, there exists an index $\ell\in \{r_1,\ldots,d\}$ such that $\beta_{\ell} = 0$. Hence, 
            \begin{align*}
                \bbf[d-\ell+1] = \underbrace{\sum_{i=1}^{r_1+3}\alpha_i \sA_{d-\ell+1,i}}_{<\,0} + \underbrace{\sum_{j=1}^{d} \beta_j \sA_{d-\ell+1,r_1+3+j}}_{=\,0} < 0.
            \end{align*}
            However, $\bbg[d-\ell+1]=\beta'_{\ell} \geq 0$, a contradiction.
            \item Suppose $m = r_1+2$. Since $z_{r_1+2}y_1\cdots y_{r_1-1} \in \inG$ (by (\ref{eq:gb4})) and $f\notin \inG$, there exists an index $k\in\{1,\ldots,r_1-1\}$ such that $\beta_k=0$. 
            Since $k<r_1$, it follows that $d-k+1 > x_1$. 
            Therefore, $\sA_{d-k+1,r_1+2} = -1$.
            Hence,
            \begin{align*}
                \bbf[d-k+1] = \underbrace{\sum_{i=1}^{r_1+3}\alpha_i \sA_{d-k+1,i}}_{= \, -\alpha_{r_1+2}\,<\,0} + \underbrace{\sum_{j=1}^{d} \beta_j \sA_{d-k+1,r_1+3+j}}_{=\,0} < 0.
            \end{align*}
            However, $\bbg[d-k+1]=\beta'_{k} \geq 0$, a contradiction.
            \item Suppose $m = r_1+3$. 
            Since $m$ is minimal, we know $\alpha_i = 0$ for all $1\leq i \leq r_1+2$. Since $\sA$ is homogenized, we also know $\sum_i \alpha_i + \sum_j \beta_j = \sum_i \alpha'_i + \sum_j \beta'_j$ (this can be seen directly from $\bbf[d+1] = \bbg[d+1]$).
            Hence, in this case, the equation simplifies to $\alpha_{r_1+3} + \sum_{j} \beta_j = \sum_{j} \beta'_j$, and moreover,
            \begin{align*}
                \textstyle \bbf = (\beta_d,\ldots,\beta_1,\alpha_{r_1+3} + \sum_{j} \beta_j).
            \end{align*}
            Since $\pi(f)=\pi(g)$, $\beta_j = \beta'_j$ for all $1\leq j \leq d$. Therefore, substituting into the above equation,
            \begin{align*}
                \textstyle \alpha_{r_1+3} + \sum_{j} \beta_j = \sum_{j} \beta'_j = \sum_{j} \beta_j,
            \end{align*}
            but $\alpha_{r_1+3}>0$, a contradiction. 
        \end{enumerate}
    \end{subcase}
\end{case}
% Case 2 %%%%%%%%%%%%%%%%%%%%%%%%%%%%%%%%%%%%%%%%%%%%%%%%%%%%%%%
\begin{case}
$\abs{\zsupp{g}} \geq 1$. Let $n$ denote the minimal index such that $z_n$ divides $g$ (i.e., $\alpha'_n > 0$ and $\alpha'_i = 0$ for all $i<n$). 
Since $\abs{\zsupp{f}} \geq \abs{\zsupp{g}}$ and $\abs{\zsupp{g}}\geq 1$, we know $\zsupp{f} \neq \emptyset$. 
Hence, let $m$ denote the minimal index such that $z_m$ divides $f$. 
Via Lemma \ref{lem:3support}, this case naturally lends itself to the following subcases of consideration.
    \begin{subcase} $n\in \{1,\ldots,r_1-1\}$.
        By Lemma $\ref{lem:3support}$, we know $\alpha'_n > 0$, $\alpha'_{n+1},\alpha'_{r_1+1} \geq 0$, and $\alpha'_i = 0$ for all $i\in \{1,\ldots,r_1+3\} \setminus \{n,n+1,r_1+1\}$. 
        Since $z_ny_1\cdots y_{r_1-1}\in \inG$ (by (\ref{eq:gb2})), $z_ny_{r_1}\cdots y_{d}\in \inG$ (by (\ref{eq:gb3})), and $g\notin \inG$, there exist indices $k_1\in \{1,\ldots, r_1-1\}$ and $\ell_1\in \{r_1,\ldots, d\}$ such that $\beta'_{k_1} = \beta'_{\ell_1} = 0$. 
        Then,
        \begin{align}
            \bbf[d-k_1+1] &= \sum_{i=1}^{r_1+3}\alpha_i\sA_{d-k_1+1,i} + \beta_{k_1} \label{eq:case2.1:fk1} \\
            \bbg[d-k_1+1] &= -(1+(r_1-n+1)x_1)\alpha'_n - (1+(r_1-n)x_1)\alpha'_{n+1} - x_1\alpha'_{r_1+1} \label{eq:case2.1:gk1} \\
            \bbf[d-\ell_1+1] &= \sum_{i=1}^{r_1+3}\alpha_i\sA_{d-\ell_1+1,i} + \beta_{\ell_1} \label{eq:case2.1:fl1} \\
            \bbg[d-\ell_1+1] &= -(r_1-n+1)\alpha'_n - (r_1-n)\alpha'_{n+1} - \alpha'_{r_1+1}. \label{eq:case2.1:gl1}
        \end{align}
        Note that $\pi(f)=\pi(g)$ implies $(\ref{eq:case2.1:fk1}) = (\ref{eq:case2.1:gk1})$ and $(\ref{eq:case2.1:fl1}) = (\ref{eq:case2.1:gl1})$.
        Now, we claim $m\in \{1,\ldots,r_1+1\}$. 
        Indeed, assume otherwise, that is, $\zsupp{f} \subseteq \{r_1+2,r_1+3\}$. 
        Then, $\bbf[d-\ell+1] = \beta_{\ell} \geq 0$ for all $\ell\in \{r_1,\ldots, d\}$, but from (\ref{eq:case2.1:gl1}), $\bbg[d-\ell_1+1] < 0$ since $\alpha'_n >0$ and $\alpha'_{n+1},\alpha'_{r_1+1}\geq 0$. 
        This contradicts $\pi(f) = \pi(g)$. 
        Hence, given the structure of Lemma \ref{lem:3support}, we consider the following subsubcases.
        \begin{enumerate}[label={\textbf{(\alph*)}},leftmargin=.575in]
            % Subcase 2.1(a): %%%%%%%%%%%%%%%%%%%%%%%%%%%%%%%%%%%%%%%%%%%%%%%%%%%%%%
            \item $m\in \{1,\ldots,r_1-1\}$. 
            Since $z_my_1\cdots y_{r_1-1}$ (by (\ref{eq:gb2})), $z_my_{r_1}\cdots y_d \in \inG$ (by (\ref{eq:gb3})), and $f\notin \inG$, there exist indices $k_2\in \{1,\ldots,r_1-1\}$ and $\ell_2\in \{r_1,\ldots,d\}$ such that $\beta_{k_2} = \beta_{\ell_2} = 0$.
            Then, we have that
            \begin{align}
                \bbf[d-k_2+1] &= \sum_{i=1}^{r_1+3}\alpha_i\sA_{d-k_2+1,i} \label{eq:case2.1a:fk2} \\
                \begin{split} \bbg[d-k_2+1] &= -(1+(r_1-n+1)x_1)\alpha'_n - (1+(r_1-n)x_1)\alpha'_{n+1} \\ &\qquad - x_1\alpha'_{r_1+1} + \beta'_{k_2} \end{split} \label{eq:case2.1a:gk2} \\
                \bbf[d-\ell_2+1] &= \sum_{i=1}^{r_1+3}\alpha_i\sA_{d-\ell_2+1,i} \label{eq:case2.1a:fl2} \\
                \bbg[d-\ell_2+1] &= -(r_1-n+1)\alpha'_n - (r_1-n)\alpha'_{n+1} - \alpha'_{r_1+1} + \beta'_{\ell_2}, \label{eq:case2.1a:gl2}
            \end{align}
            where $(\ref{eq:case2.1a:fk2}) = (\ref{eq:case2.1a:gk2})$ and $(\ref{eq:case2.1a:fl2}) = (\ref{eq:case2.1a:gl2})$ as $\pi(f)=\pi(g)$. 
            Since $1\leq k_i\leq r_1-1$ for $i\in\{1,2\}$, subtracting the equation $(\ref{eq:case2.1a:fk2}) = (\ref{eq:case2.1a:gk2})$ from $(\ref{eq:case2.1:fk1}) = (\ref{eq:case2.1:gk1})$ implies $\beta_{k_1} = -\beta'_{k_2}$.
            Similarly, since $r_1\leq \ell_i\leq d$ for $i\in\{1,2\}$, subtracting equation $(\ref{eq:case2.1a:fl2}) = (\ref{eq:case2.1a:gl2})$ from $(\ref{eq:case2.1:fl1}) = (\ref{eq:case2.1:gl1})$ implies $\beta_{\ell_1}=-\beta'_{\ell_2}$.
            Since $\beta_j,\beta'_j\geq 0$ for all $1\leq j \leq d$, this implies $\beta_{k_1} = \beta'_{k_2} = \beta_{\ell_1} = \beta'_{\ell_2} = 0$. 
            Also, by Lemma \ref{lem:3support}, we know $\alpha_m>0$, $\alpha_{m+1},\alpha_{r_1+1}\geq 0$, and $\alpha_i= 0$ for all $i\in \{1,\ldots,r_1+3\}\setminus \{m,m+1,r_1+1\}$.
            Consequently, equations (\ref{eq:case2.1:fk1}) and (\ref{eq:case2.1:fl1}) simplify to
            \begin{align}
                \bbf[d-k_1+1] &= -(1+(r_1-m+1)x_1)\alpha_m - (1+(r_1-m)x_1)\alpha_{m+1} - x_1\alpha_{r_1+1} \label{eq:case2.1a:fk1} \\
                \bbf[d-\ell_1+1] &=  -(r_1-m+1)\alpha_m - (r_1-m)\alpha_{m+1} - \alpha_{r_1+1}. \label{eq:case2.1a:fl1} 
            \end{align}
            Since $\pi(f)=\pi(g)$, $(\ref{eq:case2.1a:fk1}) = (\ref{eq:case2.1:gk1})$ and $(\ref{eq:case2.1a:fl1}) = (\ref{eq:case2.1:gl1})$, thereby implying $x_1(\ref{eq:case2.1:gl1}) - (\ref{eq:case2.1:gk1}) = x_1(\ref{eq:case2.1a:fl1}) - (\ref{eq:case2.1a:fk1})$.
            Observe that $x_1(\ref{eq:case2.1:gl1}) - (\ref{eq:case2.1:gk1}) = x_1(\ref{eq:case2.1a:fl1}) - (\ref{eq:case2.1a:fk1})$ is the following
            \begin{align}
                \alpha_m+\alpha_{m+1} = \alpha'_n + \alpha'_{n+1}. \label{eq:case2.1a:star}
            \end{align}
            Now, consider the equation $(\ref{eq:case2.1a:fl1}) = (\ref{eq:case2.1:gl1})$:
            \begin{align*}
                -(r_1-m+1)\alpha_m - (r_1-m)\alpha_{m+1} - \alpha_{r_1+1} = -(r_1-n+1)\alpha'_n - (r_1-n)\alpha'_{n+1} - \alpha'_{r_1+1}.
            \end{align*}
            Adding (\ref{eq:case2.1a:star}) to this equation $r_1$ times yields
            \begin{align*}
                (m-1)\alpha_m + m\alpha_{m+1}-\alpha_{r_1+1} = (n-1)\alpha'_n + n\alpha'_{n+1}-\alpha'_{r_1+1}.
            \end{align*}
            Either $m < n$ or $m > n$ (note that $m\neq n$ since $f$ and $g$ are relatively prime).
            First, suppose $m<n$. Subtracting (\ref{eq:case2.1a:star}) from our previous equation $m-1$ times gives
            \begin{align}
                \alpha_{m+1} - \alpha_{r_1+1} = (n-m)\alpha'_{n} + (n-m+1)\alpha'_{n+1} - \alpha'_{r_1+1} \label{eq:case2.1a:2star}
            \end{align}
            As $m<n$, we have that
            \begin{align*}
                \alpha_{m+1} - \alpha_{r_1+1} &= \underbrace{(n-m)}_{>\, 0}\underbrace{\alpha'_{n}}_{>\, 0} + \underbrace{(n-m+1)}_{>\, 0}\underbrace{\alpha'_{n+1}}_{\geq \, 0} - \alpha'_{r_1+1} \\
                &> \alpha'_{n}+\alpha'_{n+1} - \alpha'_{r_1+1} \\
                &\stackrel{(\ref{eq:case2.1a:star})}{=} \alpha_m + \alpha_{m+1} - \alpha'_{r_1+1},
            \end{align*}
            which implies
            \begin{align*}
                \alpha'_{r_1+1} > \underbrace{\alpha_m}_{>\, 0} + \alpha_{r_1+1} \quad \Longrightarrow \quad \alpha'_{r_1+1} >0.
            \end{align*}
            Since $f$ and $g$ are relatively prime, this forces $\alpha_{r_1+1} = 0$. Thus, $\zsupp{f}\subseteq \{m,m+1\}$.
            Moreover, $\alpha'_{n+1} = 0$ since $\abs{\zsupp{f}}\geq \abs{\zsupp{g}}$ and we have found $\alpha'_n,\alpha'_{r_1+1} > 0$. 
            Consequently, (\ref{eq:case2.1a:star}) reduces to $\alpha'_n = \alpha_m + \alpha_{m+1}$ and (\ref{eq:case2.1a:2star}) reduces to 
            \begin{align}
                \alpha'_{r_1+1} = (n-m)\alpha_m + (n-m-1)\alpha_{m+1}. \label{eq:case2.1a:3star}
            \end{align}
            Now, $\bbf[d+1] = \bbg[d+1]$ gives that
            \begin{align*}
                \alpha_m + \alpha_{m+1} + \sum_j \beta_j = \alpha'_n + \alpha'_{r_1+1} + \sum_j \beta'_j.
            \end{align*}
            Since $\alpha'_n = \alpha_m + \alpha_{m+1}$ and $\alpha'_{r_1+1}>0$, this implies $\sum_j \beta_j > \sum_j \beta'_j$. 
            For each $r_1\leq j \leq d$, $-\bbf[d-j+1] = -\bbg[d-j+1]$ is given by
            \begin{align*}
                (r_1-m+1)\alpha_m + (r_1-m)\alpha_{m+1} - \beta_j = (r_1-n+1)\alpha'_n + \alpha'_{r_1+1} -\beta'_j.
            \end{align*}
            Solving for $\alpha'_{r_1+1}$ and substituting $\alpha'_n = \alpha_m + \alpha_{m+1}$ yields
            \begin{align*}
                \alpha'_{r_1+1} = (n-m)\alpha_m + (n-m-1)\alpha_{m+1}+\beta'_j - \beta_j.
            \end{align*}
            Adding these equations for each $r_1\leq j \leq d$ gives
            \begin{align}
                \begin{split} (d-r_1+1)\alpha'_{r_1+1}  &= (d-r_1+1)\left[(n-m)\alpha_m + (n-m-1)\alpha_{m+1}\right] \\  &\qquad +\sum_{r_1\leq j\leq d}(\beta'_j - \beta_j). \end{split} \label{eq:case2.1a:dag}
            \end{align}
            Similarly, for each $1\leq j \leq r_1-1$, $-\bbf[d-j+1] = -\bbg[d-j+1]$ is given by
             \begin{align*}
                (1+(r_1-m+1)x_1)\alpha_m + (1+(r_1-m)x_1)\alpha_{m+1} - \beta_j = (1+(r_1-n+1)x_1)\alpha'_n + x_1\alpha'_{r_1+1} -\beta'_j.
            \end{align*}
            Solving for $x_1\alpha'_{r_1+1}$ and making the appropriate substitutions yields
            \begin{align*}
                x_1\alpha'_{r_1+1} = (n-m)x_1\alpha_m + (n-m-1)x_1\alpha_{m+1}+\beta'_j - \beta_j.
            \end{align*}
            Adding these equations for each $1\leq j \leq r_1-1$ gives
            \begin{align}
                \begin{split} (r_1-1)x_1\alpha'_{r_1+1} &= (r_1-1)\left[(n-m)x_1\alpha_m + (n-m-1)x_1\alpha_{m+1}\right] \\ &\qquad +\sum_{1\leq j\leq r_1-1}(\beta'_j - \beta_j). \end{split} \label{eq:case2.1a:ddag}
            \end{align}
            Combining (\ref{eq:case2.1a:dag}) and (\ref{eq:case2.1a:ddag}) gives
            \begin{align*}
                r_1x_1\alpha'_{r_1+1} = r_1x_1\underbrace{\left[(n-m)\alpha_m + (n-m-1)\alpha_{m+1}\right]}_{=\, \alpha'_{r_1+1} \text{ by } (\ref{eq:case2.1a:3star})} + \underbrace{\sum_{j=1}^{d} (\beta'_j - \beta_j)}_{<\, 0},
            \end{align*}
            a contradiction.
            Now, suppose $m > n$. 
            In this case, rather than subtracting $m-1$ copies of (\ref{eq:case2.1a:star}), we instead subtract $n-1$ copies of (\ref{eq:case2.1a:star}) yielding
            \begin{align*}
                (m-n)\alpha_{m}+(m-n+1)\alpha_{m+1} - \alpha_{r_1+1} = \alpha'_{n+1}-\alpha'_{r_1+1}.
            \end{align*}
            Then, since $m-n > 0$, the same argument from the $m<n$ case will follow through by appropriately replacing each occurrence of $\alpha'_n$ with $\alpha_m$ and vice versa, $\alpha'_{n+1}$ with $\alpha_{m+1}$ and vice versa, and $\alpha'_{r_1+1}$ with $\alpha_{r_1+1}$ and vice versa. 
            % Subcase 2.1(b): %%%%%%%%%%%%%%%%%%%%%%%%%%%%%%%%%%%%%%%%%%%%%%%%%%%%%%
            \item $m = r_1$.
            Since $z_{r_1}y_1\cdots y_{r_1-1}$ (by (\ref{eq:gb2})), $z_{r_1}y_{r_1}\cdots y_d \in \inG$ (by (\ref{eq:gb3})), and $f\notin \inG$, there exist indices $k_2\in \{1,\ldots,r_1-1\}$ and $\ell_2\in \{r_1,\ldots,d\}$ such that $\beta_{k_2} = \beta_{\ell_2} = 0$.
            Then, we have that
            \begin{align}
                \bbf[d-k_2+1] &= \sum_{i=1}^{r_1+3}\alpha_i\sA_{d-k_2+1,i} \label{eq:case2.1b:fk2} \\
                \begin{split} \bbg[d-k_2+1] &= -(1+(r_1-n+1)x_1)\alpha'_n - (1+(r_1-n)x_1)\alpha'_{n+1} \\ &\qquad - x_1\alpha'_{r_1+1} + \beta'_{k_2} \end{split} \label{eq:case2.1b:gk2} \\
                \bbf[d-\ell_2+1] &= \sum_{i=1}^{r_1+3}\alpha_i\sA_{d-\ell_2+1,i} \label{eq:case2.1b:fl2} \\
                \bbg[d-\ell_2+1] &= -(r_1-n+1)\alpha'_n - (r_1-n)\alpha'_{n+1} - \alpha'_{r_1+1} + \beta'_{\ell_2}, \label{eq:case2.1b:gl2}
            \end{align}
            where $(\ref{eq:case2.1b:fk2}) = (\ref{eq:case2.1b:gk2})$ and $(\ref{eq:case2.1b:fl2}) = (\ref{eq:case2.1b:gl2})$ as $\pi(f)=\pi(g)$. 
            Subtracting the equation $(\ref{eq:case2.1b:fk2}) = (\ref{eq:case2.1b:gk2})$ from $(\ref{eq:case2.1:fk1}) = (\ref{eq:case2.1:gk1})$ implies $\beta_{k_1} = -\beta'_{k_2}$.
            Similarly, subtracting equation $(\ref{eq:case2.1b:fl2}) = (\ref{eq:case2.1b:gl2})$ from $(\ref{eq:case2.1:fl1}) = (\ref{eq:case2.1:gl1})$ implies $\beta_{\ell_1}=-\beta'_{\ell_2}$.
            Since $\beta_j,\beta'_j\geq 0$ for all $1\leq j \leq d$, this implies $\beta_{k_1} = \beta'_{k_2} = \beta_{\ell_1} = \beta'_{\ell_2} = 0$.
            Also, by Lemma \ref{lem:3support}, we know $\alpha_{r_1}>0$, $\alpha_{r_1+1},\alpha_{r_1+2}\geq 0$, and $\alpha_i= 0$ for all $i\in \{1,\ldots,r_1-1\}\cup \{r_1+3\}$.
            Consequently, equations (\ref{eq:case2.1:fk1}) and (\ref{eq:case2.1:fl1}) simplify to
            \begin{align}
                \bbf[d-k_1+1] &= -(1+x_1)\alpha_{r_1} - x_1\alpha_{r_1+1} - \alpha_{r_1+2} \label{eq:case2.1b:fk1} \\
                \bbf[d-\ell_1+1] &=  -\alpha_{r_1} - \alpha_{r_1+1}. \label{eq:case2.1b:fl1} 
            \end{align}
            Since $\pi(f)=\pi(g)$, $(\ref{eq:case2.1b:fk1}) = (\ref{eq:case2.1:gk1})$ and $(\ref{eq:case2.1b:fl1}) = (\ref{eq:case2.1:gl1})$, thereby implying $x_1(\ref{eq:case2.1:gl1}) - (\ref{eq:case2.1:gk1}) = x_1(\ref{eq:case2.1b:fl1}) - (\ref{eq:case2.1b:fk1})$.
            Observe that $x_1(\ref{eq:case2.1:gl1}) - (\ref{eq:case2.1:gk1}) = x_1(\ref{eq:case2.1b:fl1}) - (\ref{eq:case2.1b:fk1})$ is the following
            \begin{align}
                \alpha_{r_1}+\alpha_{r_1+2} = \alpha'_n + \alpha'_{n+1}. \label{eq:case2.1b:star}
            \end{align}
            Now, consider the equation $-(\ref{eq:case2.1b:fl1}) = -(\ref{eq:case2.1:gl1})$:
            \begin{align*}
                \alpha_{r_1} + \alpha_{r_1+1} = (r_1-n+1)\alpha'_n + (r_1-n)\alpha'_{n+1} + \alpha'_{r_1+1}.
            \end{align*}
            Substituting (\ref{eq:case2.1b:star}) into this equation yields
            \begin{align*}
                \alpha'_n + \alpha'_{n+1} - \alpha_{r_1+2} + \alpha_{r_1+1} = (r_1-n+1)\alpha'_n + (r_1-n)\alpha'_{n+1} + \alpha'_{r_1+1},
            \end{align*}
            which rearranged is
            \begin{align}
                \alpha_{r_1+1} - \alpha_{r_1+2} = \underbrace{(r_1-n)\alpha'_{n}}_{>\, 0} + \underbrace{(r_1-n-1)\alpha'_{n+1}}_{\geq\, 0} + \alpha'_{r_1+1} \label{eq:case2.1b:2star}.
            \end{align}
            Observe that (\ref{eq:case2.1b:2star}) implies $\alpha_{r_1+1} > 0$, so since $f$ and $g$ are relatively prime, this forces $\alpha'_{r_1+1} = 0$.
            Therefore, subtracting $r_1 - n$ copies of (\ref{eq:case2.1b:star}) from (\ref{eq:case2.1b:2star}) gives
            \begin{align*}
                \alpha_{r_1+1} - (r_1-n)\alpha_{r_1} - (r_1-n+1)\alpha_{r_1+2} = -\alpha'_{n+1},
            \end{align*}
            which implies
            \begin{align}
                \alpha'_{n+1} = (r_1-n)\alpha_{r_1} + (r_1-n+1)\alpha_{r_1+2} - \alpha_{r_1+1}. \label{eq:case2.1b:inside}
            \end{align} 
            Now, $\bbf[d+1] = \bbg[d+1]$ gives that
            \begin{align*}
                \alpha_{r_1} + \alpha_{r_1+1} + \alpha_{r_1+2} + \sum_j \beta_j = \alpha'_n + \alpha'_{n+1} + \sum_j \beta'_j.
            \end{align*}
            Since $\alpha'_n = \alpha_{r_1} + \alpha_{r_1+2}-\alpha'_{n+1}$ by (\ref{eq:case2.1b:star}) and $\alpha_{r_1+1}>0$, this implies $\sum_j \beta_j < \sum_j \beta'_j$. 
            For each $r_1\leq j \leq d$, $-\bbf[d-j+1] = -\bbg[d-j+1]$ is given by
            \begin{align*}
                \alpha_{r_1} + \alpha_{r_1+1} - \beta_j = (r_1-n+1)\alpha'_n + (r_1-n)\alpha'_{n+1} -\beta'_j.
            \end{align*}
            Solving for $\alpha'_{n+1}$ and substituting $\alpha'_n = \alpha_{r_1} + \alpha_{r_1+2}-\alpha'_{n+1}$ yields
            \begin{align*}
                \alpha'_{n+1} = (r_1-n)\alpha_{r_1} + (r_1-n+1)\alpha_{r_1+2} - \alpha_{r_1+1}+\beta_j - \beta'_j.
            \end{align*}
            Adding these equations for each $r_1\leq j \leq d$ gives
            \begin{align}
                \begin{split} (d-r_1+1)\alpha'_{n+1}  &= (d-r_1+1)\left[(r_1-n)\alpha_{r_1} + (r_1-n+1)\alpha_{r_1+2} - \alpha_{r_1+1}\right] \\  &\qquad +\sum_{r_1\leq j\leq d}(\beta_j - \beta'_j). \end{split} \label{eq:case2.1b:dag}
            \end{align}
            Similarly, for each $1\leq j \leq r_1-1$, $-\bbf[d-j+1] = -\bbg[d-j+1]$ is given by
             \begin{align*}
                (1+x_1)\alpha_{r_1} + x_1\alpha_{r_1+1} + \alpha_{r_1+2} - \beta_j = (1+(r_1-n+1)x_1)\alpha'_n + (1+(r_1-n)x_1)\alpha'_{n+1} -\beta'_j.
            \end{align*}
            Solving for $x_1\alpha'_{n+1}$ and making the appropriate substitutions yields
            \begin{align*}
                x_1\alpha'_{n+1} = (r_1-n)x_1\alpha_{r_1} + (r_1-n+1)x_1\alpha_{r_1+2} -x_1\alpha_{r_1+1} +\beta_j - \beta'_j.
            \end{align*}
            Adding these equations for each $1\leq j \leq r_1-1$ gives
            \begin{align}
                \begin{split} (r_1-1)x_1\alpha'_{n+1} &= (r_1-1)\left[(r_1-n)x_1\alpha_{r_1} + (r_1-n+1)x_1\alpha_{r_1+2} - x_1\alpha_{r_1+1}\right] \\ &\qquad +\sum_{1\leq j\leq r_1-1}(\beta_j - \beta'_j). \end{split} \label{eq:case2.1b:ddag}
            \end{align}
            Combining (\ref{eq:case2.1b:dag}) and (\ref{eq:case2.1b:ddag}) gives
            \begin{align*}
                r_1x_1\alpha'_{n+1} = r_1x_1\underbrace{\left[(r_1-n)\alpha_{r_1} + (r_1-n+1)\alpha_{r_1+2} - \alpha_{r_1+1}\right]}_{=\, \alpha'_{n+1} \text{ by } (\ref{eq:case2.1b:inside})} + \underbrace{\sum_{j=1}^{d} (\beta_j - \beta'_j)}_{<\, 0},
            \end{align*}
            a contradiction.
            % Subcase 2.1(c): %%%%%%%%%%%%%%%%%%%%%%%%%%%%%%%%%%%%%%%%%%%%%%%%%%%%%%
            \item $m = r_1 + 1$. 
            Since $z_{r_1+1}y_{r_1}\cdots y_d \in \inG$ (by (\ref{eq:gb5})) and $f\notin \inG$, there exists an index $\ell_2\in \{r_1,\ldots,d\}$ such that $\beta_{\ell_2} = 0$.
            Then, we have that
            \begin{align}
                \bbf[d-\ell_2+1] &= \sum_{i=1}^{r_1+3}\alpha_i\sA_{d-\ell_2+1,i} \label{eq:case2.1c:fl2} \\
                \bbg[d-\ell_2+1] &= -(r_1-n+1)\alpha'_n - (r_1-n)\alpha'_{n+1} - \alpha'_{r_1+1} + \beta'_{\ell_2}, \label{eq:case2.1c:gl2}
            \end{align}
            where $(\ref{eq:case2.1c:fl2}) = (\ref{eq:case2.1c:gl2})$ as $\pi(f)=\pi(g)$. 
            Subtracting the equation $(\ref{eq:case2.1c:fl2}) = (\ref{eq:case2.1c:gl2})$ from $(\ref{eq:case2.1:fl1}) = (\ref{eq:case2.1:gl1})$ implies $\beta_{\ell_1}=-\beta'_{\ell_2}$.
            Since $\beta_j,\beta'_j\geq 0$ for all $1\leq j \leq d$, this implies $\beta_{\ell_1} = \beta'_{\ell_2} = 0$.
            Also, by Lemma \ref{lem:3support}, we know $\alpha_{r_1+1}>0$, $\alpha_{r_1+2},\alpha_{r_1+3}\geq 0$, and $\alpha_i= 0$ for all $i\in \{1,\ldots,r_1\}$.
            Consequently, since $\beta_{\ell_1} = 0$, equations (\ref{eq:case2.1:fk1}) and (\ref{eq:case2.1:fl1}) simplify to
            \begin{align}
                \bbf[d-k_1+1] &= -x_1\alpha_{r_1+1} - \alpha_{r_1+2} + \beta_{k_1} \label{eq:case2.1c:fk1} \\
                \bbf[d-\ell_1+1] &=  - \alpha_{r_1+1}. \label{eq:case2.1c:fl1} 
            \end{align}
            Furthermore, since $f$ and $g$ are relatively prime, $\alpha_{r_1+1}>0$ implies $\alpha'_{r_1+1} = 0$, so equations (\ref{eq:case2.1:gk1}) and (\ref{eq:case2.1:gl1}) simplify to
            \begin{align}
                \bbg[d-k_1+1] &= -(1+(r_1-n+1)x_1)\alpha'_n - (1+(r_1-n)x_1)\alpha'_{n+1} \label{eq:case2.1c:gk1} \\
                \bbg[d-\ell_1+1] &= -(r_1-n+1)\alpha'_n - (r_1-n)\alpha'_{n+1}. \label{eq:case2.1c:gl1}
            \end{align}
            Since $\pi(f)=\pi(g)$, $(\ref{eq:case2.1c:fk1}) = (\ref{eq:case2.1c:gk1})$ and $(\ref{eq:case2.1c:fl1}) = (\ref{eq:case2.1c:gl1})$.
            Therefore, we have that $-(\ref{eq:case2.1c:fk1}) = -(\ref{eq:case2.1c:gk1})$ and $-(\ref{eq:case2.1c:fl1}) = -(\ref{eq:case2.1c:gl1})$, that is,
            \begin{align}
                x_1\alpha_{r_1+1} + \alpha_{r_1+2} - \beta_{k_1} = (1+(r_1-n+1)x_1)\alpha'_n + (1+(r_1-n)x_1)\alpha'_{n+1} \label{eq:case2.1c:inside1}
            \end{align}
            and
            \begin{align}
                \alpha_{r_1+1} = (r_1-n+1)\alpha'_n + (r_1-n)\alpha'_{n+1}. \label{eq:case2.1c:inside2}
            \end{align}
            Now, $\bbf[d+1] = \bbg[d+1]$ gives that
            \begin{align*}
                \alpha_{r_1+1} + \alpha_{r_1+2} + \alpha_{r_1+3} + \sum_j \beta_j = \alpha'_n + \alpha'_{n+1} + \sum_j \beta'_j.
            \end{align*}
            Substituting \eqref{eq:case2.1c:inside2} and since $(r_1-n)\alpha'_{n}>0$, we obtain 
            \begin{equation} \label{eq:betaineq}
            \sum_j \beta_j < \sum_j \beta'_j \, .
            \end{equation}
            For each $r_1\leq j \leq d$, $-\bbf[d-j+1] = -\bbg[d-j+1]$ is given by
            \begin{align*}
                \alpha_{r_1+1} - \beta_j = (r_1-n+1)\alpha'_n + (r_1-n)\alpha'_{n+1} -\beta'_j,
            \end{align*}
            which readily implies
            \begin{align*}
                \alpha_{r_1+1} = (r_1-n+1)\alpha'_n + (r_1-n)\alpha'_{n+1} + \beta_j - \beta'_j.
            \end{align*}
            Adding these equations for each $r_1\leq j \leq d$ gives
            \begin{align*}
                \begin{split} (d-r_1+1)\alpha_{r_1+1}  &= (d-r_1+1)\left[(r_1-n+1)\alpha'_{n} + (r_1-n)\alpha'_{n+1}\right] \\  &\qquad + \sum_{r_1\leq j\leq d}(\beta_j - \beta'_j). \end{split} 
            \end{align*}
            Using (\ref{eq:case2.1c:inside2}), this simplifies to
            \begin{align}
                0  &=  \sum_{r_1\leq j\leq d}(\beta_j - \beta'_j) \, .
                \label{eq:case2.1c:dag}
            \end{align}
            Similarly, for each $1\leq j \leq r_1-1$, $-\bbf[d-j+1] = -\bbg[d-j+1]$ is given by
            \begin{align*}
                x_1\alpha_{r_1+1} + \alpha_{r_1+2} - \beta_j = (1+(r_1-n+1)x_1)\alpha'_n + (1+(r_1-n)x_1)\alpha'_{n+1} - \beta'_j,
            \end{align*}
            which implies
            \begin{align*}
                x_1\alpha_{r_1+1} = (1+(r_1-n+1)x_1)\alpha'_n + (1+(r_1-n)x_1)\alpha'_{n+1} -\alpha_{r_1+2}  + \beta_j - \beta'_j.
            \end{align*}
            Adding these equations for each $1\leq j \leq r_1-1$ gives
            \begin{align*}
                \begin{split} (r_1-1)x_1\alpha_{r_1+1} &= (r_1-1)\big[(1+(r_1-n+1)x_1)\alpha'_n + (1+(r_1-n)x_1)\alpha'_{n+1}  \\ &\qquad - \alpha_{r_1+2} \big] +\sum_{1\leq j\leq r_1-1}(\beta_j - \beta'_j) \, . \end{split} 
            \end{align*}
            Using (\ref{eq:case2.1c:inside1}), this simplifies to
            \begin{align}
                0 &= -(r_1-1) \beta_{k_1} + \sum_{1\leq j\leq r_1-1}(\beta_j - \beta'_j).
                \label{eq:case2.1c:ddag}
            \end{align}
            Combining (\ref{eq:case2.1c:dag}) and (\ref{eq:case2.1c:ddag}), and observing \eqref{eq:betaineq}, gives
            \begin{align*}
                0 = -(r_1-1)\beta_{k_1} + \underbrace{\sum_{j=1}^{d} (\beta_j - \beta'_j)}_{<\, 0},
            \end{align*}
            which implies $(r_1-1)\beta_{k_1} < 0$, a contradiction.
        \end{enumerate}
    \end{subcase}
    % Subcase 2.2 %%%%%%%%%%%%%%%%%%%%%%%%%%%%%%%%%%%%%%%%%%%%%%%%%%%%%%%%%%%%%%%%%%%%%%%%%%%%
    \begin{subcase}$n=r_1$.
        By Lemma $\ref{lem:3support}$, we know $\alpha'_{r_1} > 0$, $\alpha'_{r_1+1},\alpha'_{r_1+2} \geq 0$, and $\alpha'_i = 0$ for all $i\in \{1,\ldots,r_1-1\} \cup \{r_1+3\}$. 
        Since $z_{r_1}y_1\cdots y_{r_1-1}\in \inG$ (by (\ref{eq:gb2})), $z_{r_1}y_{r_1}\cdots y_{d}\in \inG$ (by (\ref{eq:gb3})), and $g\notin \inG$, there exist indices $k_1\in \{1,\ldots, r_1-1\}$ and $\ell_1\in \{r_1,\ldots, d\}$ such that $\beta'_{k_1} = \beta'_{\ell_1} = 0$. 
        Then,
        \begin{align}
            \bbf[d-k_1+1] &= \sum_{i=1}^{r_1+3}\alpha_i\sA_{d-k_1+1,i} + \beta_{k_1} \label{eq:case2.2:fk1} \\
            \bbg[d-k_1+1] &= -(1+x_1)\alpha'_{r_1} - x_1\alpha'_{r_1+1} - \alpha'_{r_1+2} \label{eq:case2.2:gk1} \\
            \bbf[d-\ell_1+1] &= \sum_{i=1}^{r_1+3}\alpha_i\sA_{d-\ell_1+1,i} + \beta_{\ell_1} \label{eq:case2.2:fl1} \\
            \bbg[d-\ell_1+1] &= -\alpha'_{r_1} - \alpha'_{r_1+1}. \label{eq:case2.2:gl1}
        \end{align}
        Note that $\pi(f)=\pi(g)$ implies $(\ref{eq:case2.2:fk1}) = (\ref{eq:case2.2:gk1})$ and $(\ref{eq:case2.2:fl1}) = (\ref{eq:case2.2:gl1})$.
        Now, we claim $m\in \{1,\ldots,r_1-1\} \cup \{r_1+1\}$ (we need not consider $m=r_1$ since $f$ and $g$ are relatively prime and $n=r_1$ in this case). 
        Indeed, assume otherwise, that is, $\zsupp{f} \subseteq \{r_1+2,r_1+3\}$. 
        Then, $\bbf[d-\ell+1] = \beta_{\ell} \geq 0$ for all $\ell\in \{r_1,\ldots, d\}$, but from (\ref{eq:case2.2:gl1}), $\bbg[d-\ell_1+1] < 0$ since $\alpha'_{r_1} >0$ and $\alpha'_{r_1+1}\geq 0$. 
        This contradicts $\pi(f) = \pi(g)$. 
        Hence, given the structure of Lemma \ref{lem:3support} and since $m$ cannot be $r_1$, we consider the following subsubcases.
        \begin{enumerate}[label={\textbf{(\alph*)}},leftmargin=.575in]
            % Subcase 2.2(a): %%%%%%%%%%%%%%%%%%%%%%%%%%%%%%%%%%%%%%%%%%%%%%%%%%%%%%
            \item $m\in \{1,\ldots,r_1-1\}$. 
            Since $z_my_1\cdots y_{r_1-1}$ (by (\ref{eq:gb2})), $z_my_{r_1}\cdots y_d \in \inG$ (by (\ref{eq:gb3})), and $f\notin \inG$, there exist indices $k_2\in \{1,\ldots,r_1-1\}$ and $\ell_2\in \{r_1,\ldots,d\}$ such that $\beta_{k_2} = \beta_{\ell_2} = 0$.
            Then, we have that 
            \begin{align}
                \bbf[d-k_2+1] &= \sum_{i=1}^{r_1+3}\alpha_i\sA_{d-k_2+1,i} \label{eq:case2.2a:fk2} \\
                \bbg[d-k_2+1] &= -(1+x_1)\alpha'_{r_1} - x_1\alpha'_{r_1+1} - \alpha'_{r_1+2} + \beta'_{k_2} \label{eq:case2.2a:gk2} \\
                \bbf[d-\ell_2+1] &= \sum_{i=1}^{r_1+3}\alpha_i\sA_{d-\ell_2+1,i} \label{eq:case2.2a:fl2} \\
                \bbg[d-\ell_2+1] &= -\alpha'_{r_1} - \alpha'_{r_1+1} + \beta'_{\ell_2}, \label{eq:case2.2a:gl2}
            \end{align}
            where $(\ref{eq:case2.2a:fk2}) = (\ref{eq:case2.2a:gk2})$ and $(\ref{eq:case2.2a:fl2}) = (\ref{eq:case2.2a:gl2})$ as $\pi(f)=\pi(g)$. 
            Subtracting the equation $(\ref{eq:case2.2a:fk2}) = (\ref{eq:case2.2a:gk2})$ from $(\ref{eq:case2.2:fk1}) = (\ref{eq:case2.2:gk1})$ implies $\beta_{k_1} = -\beta'_{k_2}$.
            Similarly, subtracting equation $(\ref{eq:case2.2a:fl2}) = (\ref{eq:case2.2a:gl2})$ from $(\ref{eq:case2.2:fl1}) = (\ref{eq:case2.2:gl1})$ implies $\beta_{\ell_1}=-\beta'_{\ell_2}$.
            Since $\beta_j,\beta'_j\geq 0$ for all $1\leq j \leq d$, this implies $\beta_{k_1} = \beta'_{k_2} = \beta_{\ell_1} = \beta'_{\ell_2} = 0$. 
            Also, by Lemma \ref{lem:3support}, we know $\alpha_m>0$, $\alpha_{m+1},\alpha_{r_1+1}\geq 0$, and $\alpha_i= 0$ for all $i\in \{1,\ldots,r_1+3\}\setminus \{m,m+1,r_1+1\}$.
            Consequently, equations (\ref{eq:case2.2:fk1}) and (\ref{eq:case2.2:fl1}) simplify to
            \begin{align}
                \bbf[d-k_1+1] &= -(1+(r_1-m+1)x_1)\alpha_m - (1+(r_1-m)x_1)\alpha_{m+1} - x_1\alpha_{r_1+1} \label{eq:case2.2a:fk1} \\
                \bbf[d-\ell_1+1] &=  -(r_1-m+1)\alpha_m - (r_1-m)\alpha_{m+1} - \alpha_{r_1+1}. \label{eq:case2.2a:fl1} 
            \end{align}
            Since $\pi(f)=\pi(g)$, $(\ref{eq:case2.2a:fk1}) = (\ref{eq:case2.2:gk1})$ and $(\ref{eq:case2.2a:fl1}) = (\ref{eq:case2.2:gl1})$, thereby implying $x_1(\ref{eq:case2.2:gl1}) - (\ref{eq:case2.2:gk1}) = x_1(\ref{eq:case2.2a:fl1}) - (\ref{eq:case2.2a:fk1})$.
            Observe that $x_1(\ref{eq:case2.2:gl1}) - (\ref{eq:case2.2:gk1}) = x_1(\ref{eq:case2.2a:fl1}) - (\ref{eq:case2.2a:fk1})$ is the following
            \begin{align}
                \alpha_m+\alpha_{m+1} = \alpha'_{r_1} + \alpha'_{r_1+2}. \label{eq:case2.2a:star}
            \end{align}
            Now, consider the equation $-(\ref{eq:case2.2a:fl1}) = -(\ref{eq:case2.2:gl1})$:
            \begin{align*}
                (r_1-m+1)\alpha_m + (r_1-m)\alpha_{m+1} + \alpha_{r_1+1} = \alpha'_{r_1} + \alpha'_{r_1+1}.
            \end{align*}
            Substituting (\ref{eq:case2.2a:star}) into this equation and solving for $\alpha'_{r_1+1}$ yields
            \begin{align}
                \alpha'_{r_1+1} = (r_1-m)\alpha_{m} + (r_1-m-1)\alpha_{m+1} + \alpha_{r_1+1} + \alpha'_{r_1+2}. \label{eq:case2.2a:2star}
            \end{align}
            Observe that (\ref{eq:case2.2a:2star}) implies $\alpha'_{r_1+1} > 0$, so since $f$ and $g$ are relatively prime, this forces $\alpha_{r_1+1} = 0$.
            Thus, $\zsupp{f}\subseteq \{m,m+1\}$.
            Moreover, since $\abs{\zsupp{f}}\geq \abs{\zsupp{g}}$, $\alpha_{r_1+1}=0$, and we have $\alpha'_{r_1},\alpha'_{r_1+1} > 0$, it follows that $\alpha_{m+1}>0$ and $\alpha'_{r_1+2} = 0$. 
            Consequently, (\ref{eq:case2.2a:star}) reduces to $\alpha'_{r_1} = \alpha_m + \alpha_{m+1}$ and (\ref{eq:case2.2a:2star}) reduces to 
            \begin{align*}
                \alpha'_{r_1+1} = (r_1-m)\alpha_m + (r_1-m-1)\alpha_{m+1}.
            \end{align*}
            Summing these reduced equations yields 
            \begin{align}
                \alpha'_{r_1}+\alpha'_{r_1+1} = (r_1-m+1)\alpha_m + (r_1-m)\alpha_{m+1}. \label{eq:case2.2a:stars}
            \end{align}
            Now, $\bbf[d+1] = \bbg[d+1]$ gives that
            \begin{align*}
                \alpha_m + \alpha_{m+1} + \sum_j \beta_j = \alpha'_{r_1} + \alpha'_{r_1+1} + \sum_j \beta'_j.
            \end{align*}
            Since $\alpha'_{r_1} = \alpha_m + \alpha_{m+1}$ and $\alpha'_{r_1+1}>0$, this implies 
            \begin{equation}\label{eq:betasubcase2.2a}
                \sum_j \beta_j > \sum_j \beta'_j \, .
            \end{equation} 
            For each $r_1\leq j \leq d$, $-\bbf[d-j+1] = -\bbg[d-j+1]$ is given by
            \begin{align*}
                (r_1-m+1)\alpha_m + (r_1-m)\alpha_{m+1} - \beta_j = \alpha'_{r_1} + \alpha'_{r_1+1} -\beta'_j,
            \end{align*}
            which, via (\ref{eq:case2.2a:stars}), implies $\beta_j = \beta'_j$.
            Similarly, for each $1\leq j \leq r_1-1$, $-\bbf[d-j+1] = -\bbg[d-j+1]$ is given by
             \begin{align*}
                (1+(r_1-m+1)x_1)\alpha_m + (1+(r_1-m)x_1)\alpha_{m+1} - \beta_j = (1+x_1)\alpha'_{r_1} + x_1\alpha'_{r_1+1} -\beta'_j,
            \end{align*}
            which, via (\ref{eq:case2.2a:stars}), implies $\beta_j = \beta'_j$.
            Thus, we have that $\beta_j = \beta'_j$ for all $1\leq j \leq d$, but we had in \eqref{eq:betasubcase2.2a} that $\sum_j \beta_j > \sum_j \beta'_j$, a contradiction.
            % Subcase 2.2(b): %%%%%%%%%%%%%%%%%%%%%%%%%%%%%%%%%%%%%%%%%%%%%%%%%%%%%%
            \item $m = r_1 + 1$. 
            Since $z_{r_1+1}y_{r_1}\cdots y_d \in \inG$ (by (\ref{eq:gb5})) and $f\notin \inG$, there exists an index $\ell_2\in \{r_1,\ldots,d\}$ such that $\beta_{\ell_2} = 0$.
            Then, we have that
            \begin{align}
                \bbf[d-\ell_2+1] &= \sum_{i=1}^{r_1+3}\alpha_i\sA_{d-\ell_2+1,i} \label{eq:case2.2b:fl2} \\
                \bbg[d-\ell_2+1] &= -\alpha'_{r_1} - \alpha'_{r_1+1} + \beta'_{\ell_2}, \label{eq:case2.2b:gl2}
            \end{align}
            where $(\ref{eq:case2.2b:fl2}) = (\ref{eq:case2.2b:gl2})$ as $\pi(f)=\pi(g)$. 
            Subtracting the equation $(\ref{eq:case2.2b:fl2}) = (\ref{eq:case2.2b:gl2})$ from $(\ref{eq:case2.2:fl1}) = (\ref{eq:case2.2:gl1})$ implies $\beta_{\ell_1}=-\beta'_{\ell_2}$.
            Since $\beta_j,\beta'_j\geq 0$ for all $1\leq j \leq d$, this implies $\beta_{\ell_1} = \beta'_{\ell_2} = 0$.
            Also, by Lemma \ref{lem:3support}, we know $\alpha_{r_1+1}>0$, $\alpha_{r_1+2},\alpha_{r_1+3}\geq 0$, and $\alpha_i= 0$ for all $i\in \{1,\ldots,r_1\}$.
            Consequently, since $\beta_{\ell_1} = 0$, equations (\ref{eq:case2.2:fk1}) and (\ref{eq:case2.2:fl1}) simplify to
            \begin{align}
                \bbf[d-k_1+1] &= -x_1\alpha_{r_1+1} - \alpha_{r_1+2} + \beta_{k_1} \label{eq:case2.2b:fk1} \\
                \bbf[d-\ell_1+1] &=  - \alpha_{r_1+1}. \label{eq:case2.2b:fl1} 
            \end{align}
            Furthermore, since $f$ and $g$ are relatively prime, $\alpha_{r_1+1}>0$ implies $\alpha'_{r_1+1} = 0$, so equations (\ref{eq:case2.2:gk1}) and (\ref{eq:case2.2:gl1}) simplify to
            \begin{align}
                \bbg[d-k_1+1] &= -(1+x_1)\alpha'_{r_1} - \alpha'_{r_1+2} \label{eq:case2.2b:gk1} \\
                \bbg[d-\ell_1+1] &= -\alpha'_{r_1}. \label{eq:case2.2b:gl1}
            \end{align}
            Since $\pi(f)=\pi(g)$, $(\ref{eq:case2.2b:fk1}) = (\ref{eq:case2.2b:gk1})$ and $(\ref{eq:case2.2b:fl1}) = (\ref{eq:case2.2b:gl1})$.
            Therefore, we have that $-(\ref{eq:case2.2b:fk1}) = -(\ref{eq:case2.2b:gk1})$ and $-(\ref{eq:case2.2b:fl1}) = -(\ref{eq:case2.2b:gl1})$, that is,
            \begin{align}
                (1+x_1)\alpha'_{r_1} + \alpha'_{r_1+2} = x_1\alpha_{r_1+1} + \alpha_{r_1+2} - \beta_{k_1} \label{eq:case2.2b:2star}
            \end{align}
            and
            \begin{align}
                \alpha'_{r_1} = \alpha_{r_1+1}. \label{eq:case2.2b:star}
            \end{align}
            Plugging (\ref{eq:case2.2b:star}) into (\ref{eq:case2.2b:2star}) and solving for $\beta_{k_1}$ gives
            \begin{align}
                \beta_{k_1} = \alpha_{r_1+2}-\alpha_{r_1+1} - \alpha'_{r_1+2}. \label{eq:case2.2b:final}
            \end{align}
            Note that if $\alpha'_{r_1+2}>0$, the relatively prime condition would force $\alpha_{r_1+2} = 0$, thereby implying $\beta_{k_1}<0$, a contradiction.
            Hence, we may assume $\alpha'_{r_1+2} = 0$, and since $\beta_{k_1}\geq 0$, it must be that $\alpha_{r_1+2}>0$. 
            Now, $\bbf[d+1] = \bbg[d+1]$ gives that
            \begin{align*}
                \alpha_{r_1+1} + \alpha_{r_1+2} + \alpha_{r_1+3} + \sum_j \beta_j = \alpha'_{r_1} + \sum_j \beta'_j.
            \end{align*}
            Substituting (\ref{eq:case2.2b:star}) and since $\alpha_{r_1+2}>0$, this implies 
            \begin{equation} \label{eq:betaineqcase2.2b}
            \sum_j \beta_j < \sum_j \beta'_j \, .
            \end{equation}
            For each $r_1\leq j \leq d$, $-\bbf[d-j+1] = -\bbg[d-j+1]$ is given by
            \begin{align*}
                \alpha_{r_1+1} - \beta_j =  \alpha'_{r_1} -\beta'_j,
            \end{align*}
            which, via (\ref{eq:case2.2b:star}), implies $\beta_j = \beta'_j$.
            Similarly, for each $1\leq j \leq r_1-1$, $-\bbf[d-j+1] = -\bbg[d-j+1]$ is given by
             \begin{align*}
                x_1\alpha_{r_1+1} + \alpha_{r_1+2} - \beta_j = (1+x_1)\alpha'_{r_1} - \beta'_j,
            \end{align*}
            which, via (\ref{eq:case2.2b:2star}), implies $\beta_{k_1} = \beta_j - \beta'_j$.
            Therefore,
            \begin{align*}
                0< \sum_{j=1}^{d}(\beta'_j-\beta_j) 
                =
                \sum_{j=1}^{r_1-1}(\beta'_j-\beta_j) 
                +
                \sum_{j=r_1}^{d}(\beta'_j-\beta_j) 
                =
               -(r_1-1)\beta_{k_1}
               \leq 0 \, ,
            \end{align*}
            a contradiction. 
        \end{enumerate}
    \end{subcase}
    % Subcase 2.3 %%%%%%%%%%%%%%%%%%%%%%%%%%%%%%%%%%%%%%%%%%%%%%%%%%%%%%%%%%%%%%%%%%%%%%%%%%%%
    \begin{subcase}$n=r_1+1$.
    By Lemma $\ref{lem:3support}$, we know $\alpha'_{r_1+1} > 0$, $\alpha'_{r_1+2},\alpha'_{r_1+3} \geq 0$, and $\alpha'_i = 0$ for all $i\in \{1,\ldots,r_1\}$. 
        Since $z_{r_1+1}y_{r_1}\cdots y_{d}\in \inG$ (by (\ref{eq:gb5})) and $g\notin \inG$, there exists an index $\ell_1\in \{r_1,\ldots, d\}$ such that $\beta'_{\ell_1} = 0$. 
        Then,
        \begin{align}
            \bbf[d-\ell_1+1] &= \sum_{i=1}^{r_1+3}\alpha_i\sA_{d-\ell_1+1,i} + \beta_{\ell_1} \label{eq:case2.3:fl1} \\
            \bbg[d-\ell_1+1] &= -\alpha'_{r_1+1}\label{eq:case2.3:gl1},
        \end{align}
        where $(\ref{eq:case2.3:fl1}) = (\ref{eq:case2.3:gl1})$ as $\pi(f)=\pi(g)$.
        Now, we claim $m\in \{1,\ldots,r_1\}$ (we need not consider $m=r_1+1$ since $f$ and $g$ are relatively prime and $n=r_1+1$ in this case). 
        Indeed, assume otherwise, that is, $\zsupp{f} \subseteq \{r_1+2,r_1+3\}$. 
        Then, $\bbf[d-\ell+1] = \beta_{\ell} \geq 0$ for all $\ell\in \{r_1,\ldots, d\}$, but from (\ref{eq:case2.3:gl1}), $\bbg[d-\ell_1+1] < 0$ since $\alpha'_{r_1+1} > 0$. 
        This contradicts $\pi(f) = \pi(g)$. 
        Hence, given the structure of Lemma \ref{lem:3support} and since $m$ cannot be $r_1+1$, we consider the following subsubcases.
        \begin{enumerate}[label={\textbf{(\alph*)}},leftmargin=.575in]
            % Subcase 2.3(a): %%%%%%%%%%%%%%%%%%%%%%%%%%%%%%%%%%%%%%%%%%%%%%%%%%%%%%
            \item $m\in \{1,\ldots,r_1-1\}$.
            Since $z_my_{1}\cdots y_{r_1-1} \in \inG$ (by (\ref{eq:gb2})), $z_my_{r_1}\cdots y_d \in \inG$ (by (\ref{eq:gb3})), and $f\notin \inG$, there exist indices $k_2\in \{1,\ldots,r_1-1\}$ and $\ell_2\in \{r_1,\ldots,d\}$ such that $\beta_{k_2} = \beta_{\ell_2} = 0$.
            Then, we have that 
            \begin{align}
                \bbf[d-k_2+1] &= \sum_{i=1}^{r_1+3}\alpha_i\sA_{d-k_2+1,i} \label{eq:case2.3a:fk2} \\
                \bbg[d-k_2+1] &= -x_1\alpha'_{r_1+1} - \alpha'_{r_1+2} + \beta'_{k_2} \label{eq:case2.3a:gk2} \\
                \bbf[d-\ell_2+1] &= \sum_{i=1}^{r_1+3}\alpha_i\sA_{d-\ell_2+1,i} \label{eq:case2.3a:fl2} \\
                \bbg[d-\ell_2+1] &= -\alpha'_{r_1+1} + \beta'_{\ell_2}, \label{eq:case2.3a:gl2}
            \end{align}
            where $(\ref{eq:case2.3a:fk2}) = (\ref{eq:case2.3a:gk2})$ and $(\ref{eq:case2.3a:fl2}) = (\ref{eq:case2.3a:gl2})$ since $\pi(f)=\pi(g)$. 
            Subtracting the equation $(\ref{eq:case2.3a:fl2}) = (\ref{eq:case2.3a:gl2})$ from $(\ref{eq:case2.3:fl1}) = (\ref{eq:case2.3:gl1})$ implies $\beta_{\ell_1} = -\beta'_{\ell_2}$. Since $\beta_j,\beta'_j\geq 0$ for all $1\leq j \leq d$, this implies $\beta_{\ell_1} = \beta'_{\ell_2} = 0$.
            Also, by Lemma \ref{lem:3support} and since $n=r_1+1$, we know $\alpha_m>0$, $\alpha_{m+1}\geq 0$, $\alpha_{r_1+1}=0$, and $\alpha_i= 0$ for all $i\in \{1,\ldots,r_1+3\}\setminus \{m,m+1\}$.
            Consequently, since $\beta_{\ell_1} = 0$, the equation $(\ref{eq:case2.3:fl1}) = (\ref{eq:case2.3:gl1})$ simplifies to
            \begin{align*}
                 -(r_1-m+1)\alpha_m - (r_1-m)\alpha_{m+1} = -\alpha'_{r_1+1},
            \end{align*}
            which implies
            \begin{align}
                \alpha'_{r_1+1} = (r_1-m+1)\alpha_m + (r_1-m)\alpha_{m+1}. \label{eq:case2.3a:star}
            \end{align}
            Furthermore, the equation $-(\ref{eq:case2.3a:fk2}) = -(\ref{eq:case2.3a:gk2})$ simplifies to
            \begin{align*}
                (1+(r_1-m+1)x_1)\alpha_m + (1+(r_1-m)x_1)\alpha_{m+1} = x_1\alpha'_{r_1+1} + \alpha'_{r_1+2} - \beta'_{k_2}.
            \end{align*}
            Via (\ref{eq:case2.3a:star}), this equation is equivalent to
            \begin{align*}
                \alpha_m + \alpha_{m+1} + x_1\alpha'_{r_1+1} = x_1\alpha'_{r_1+1} + \alpha'_{r_1+2} - \beta'_{k_2},
            \end{align*}
            which implies
            \begin{align}
                \beta'_{k_2} = \alpha'_{r_1+2} - \alpha_m - \alpha_{m+1}. \label{eq:case2.3a:2star}
            \end{align}
            Note that if $\alpha'_{r_1+2} = 0$, $\beta'_{k_2} < 0$ by (\ref{eq:case2.3a:2star}), a contradiction. 
            Hence, we may assume $\alpha'_{r_1+2} > 0$.
            Also, since $\abs{\zsupp{f}} \geq \abs{\zsupp{g}}$ and $\alpha_{r_1+1} = 0$, it follows that $\alpha_{m+1}>0$ and $\alpha'_{r_1+3}=0$.
            Now, $\bbf[d+1] = \bbg[d+1]$ gives that
            \begin{align*}
                \alpha_{m} + \alpha_{m+1} + \sum_j \beta_j = \alpha'_{r_1+1} + \alpha'_{r_1+2} + \sum_j \beta'_j.
            \end{align*}
            Substituting (\ref{eq:case2.3a:star}), this implies
            \begin{equation}\label{eq:betaineqcase2.3b}
            \sum_j \beta_j > \sum_j \beta'_j \, .
            \end{equation} 
            For each $r_1\leq j \leq d$, $-\bbf[d-j+1] = -\bbg[d-j+1]$ is given by
            \begin{align*}
                (r_1-m+1)\alpha_{m} + (r_1-m)\alpha_{m+1} - \beta_j =  \alpha'_{r_1+1} -\beta'_j,
            \end{align*}
            which, via (\ref{eq:case2.3a:star}), implies $\beta_j = \beta'_j$.
            Similarly, for each $1\leq j \leq r_1-1$, $-\bbf[d-j+1] = -\bbg[d-j+1]$ is given by
             \begin{align*}
                (1+(r_1-m+1)x_1)\alpha_{m} + (1+(r_1-m)x_1)\alpha_{m+1} - \beta_j = x_1\alpha'_{r_1+1} + \alpha'_{r_1+2} - \beta'_j,
            \end{align*}
            which, via (\ref{eq:case2.3a:star}) and (\ref{eq:case2.3a:2star}), implies 
            \begin{align} 
                \beta'_{k_2} = \beta'_j - \beta_j. \label{eq:case2.3a:final}
            \end{align}
            Therefore, by~\eqref{eq:betaineqcase2.3b} and~\eqref{eq:case2.3a:final},
            \begin{align*}
               0< \sum_{j=1}^{d}(\beta_j-\beta'_j) = \sum_{j=1}^{r_1-1}(\beta_j-\beta'_j) +\sum_{j=r_1}^{d}(\beta_j-\beta'_j) = \sum_{j=1}^{r_1-1}(\beta_j-\beta'_j) = -(r_1-1)\beta'_{k_2} \leq 0,
            \end{align*}
            a contradiction.
            % Subcase 2.3(b): %%%%%%%%%%%%%%%%%%%%%%%%%%%%%%%%%%%%%%%%%%%%%%%%%%%%%%
            \item $m=r_1$.
            Since $z_{r_1}y_{1}\cdots y_{r_1-1} \in \inG$ (by (\ref{eq:gb2})), $z_{r_1}y_{r_1}\cdots y_d \in \inG$ (by (\ref{eq:gb3})), and $f\notin \inG$, there exist indices $k_2\in \{1,\ldots,r_1-1\}$ and $\ell_2\in \{r_1,\ldots,d\}$ such that $\beta_{k_2} = \beta_{\ell_2} = 0$.
            Then, we have that 
            \begin{align}
                \bbf[d-k_2+1] &= \sum_{i=1}^{r_1+3}\alpha_i\sA_{d-k_2+1,i} \label{eq:case2.3b:fk2} \\
                \bbg[d-k_2+1] &= -x_1\alpha'_{r_1+1} - \alpha'_{r_1+2} + \beta'_{k_2} \label{eq:case2.3b:gk2} \\
                \bbf[d-\ell_2+1] &= \sum_{i=1}^{r_1+3}\alpha_i\sA_{d-\ell_2+1,i} \label{eq:case2.3b:fl2} \\
                \bbg[d-\ell_2+1] &= -\alpha'_{r_1+1} + \beta'_{\ell_2}, \label{eq:case2.3b:gl2}
            \end{align}
            where $(\ref{eq:case2.3b:fk2}) = (\ref{eq:case2.3b:gk2})$ and $(\ref{eq:case2.3b:fl2}) = (\ref{eq:case2.3b:gl2})$ since $\pi(f)=\pi(g)$. 
            Subtracting the equation $(\ref{eq:case2.3b:fl2}) = (\ref{eq:case2.3b:gl2})$ from $(\ref{eq:case2.3:fl1}) = (\ref{eq:case2.3:gl1})$ implies $\beta_{\ell_1} = -\beta'_{\ell_2}$. Since $\beta_j,\beta'_j\geq 0$ for all $1\leq j \leq d$, this implies $\beta_{\ell_1} = \beta'_{\ell_2} = 0$.
            We know $\alpha_{r_1+1} = 0$ since $\alpha'_{r_1+1}>0$. 
            Also, by Lemma \ref{lem:3support}, we know $\alpha_{r_1}>0$ and $\alpha_{r_1+2}\geq 0$, so it follows that $\alpha_i= 0$ for all $i\in \{1,\ldots,r_1+3\}\setminus \{r_1,r_1+2\}$.
            Consequently, since $\beta_{\ell_1} = 0$, the equation $(\ref{eq:case2.3:fl1}) = (\ref{eq:case2.3:gl1})$ simplifies to
            \begin{align}
                \alpha_{r_1} = \alpha'_{r_1+1}. \label{eq:case2.3b:star}
            \end{align}
            Furthermore, the equation $-(\ref{eq:case2.3b:fk2}) = -(\ref{eq:case2.3b:gk2})$ simplifies to
            \begin{align*}
                (1+x_1)\alpha_{r_1} + \alpha_{r_1+2} = x_1\alpha'_{r_1+1} + \alpha'_{r_1+2} - \beta'_{k_2}.
            \end{align*}
            Via (\ref{eq:case2.3b:star}), this equation is equivalent to
            \begin{align*}
                (1+x_1)\alpha'_{r_1+1} + \alpha_{r_1+2} = x_1\alpha'_{r_1+1} + \alpha'_{r_1+2} - \beta'_{k_2},
            \end{align*}
            which implies
            \begin{align}
                \beta'_{k_2} = \alpha'_{r_1+2} - \alpha'_{r_1+1} - \alpha_{r_1+2}. \label{eq:case2.3b:2star}
            \end{align}
            Note that if $\alpha'_{r_1+2} = 0$, $\beta'_{k_2} < 0$ by (\ref{eq:case2.3b:2star}), a contradiction. 
            Hence, it must be that $\alpha'_{r_1+2} > 0$.
            However, by the relatively prime condition, this implies $\alpha_{r_1+2} = 0$.
            As a consequence, since $\alpha_{r_1+1} = \alpha_{r_1+2} = 0$ and $\alpha'_{r_1+1}, \alpha'_{r_1+2} > 0$, we have that
            \begin{align*}
                \abs{\zsupp{f}} = 1 < 2 \leq \abs{\zsupp{g}},
            \end{align*}
            contradicting our assumption that $\abs{\zsupp{f}} \geq \abs{\zsupp{g}}$. 
        \end{enumerate}
    \end{subcase}
    \begin{subcase}$n\in \{r_1+2, r_1+3\}$.
    In this case, $\zsupp{g} \subseteq \{r_1+2,r_1+3\}$.
    Consequently, for $1\leq j \leq d$, we have that
    \begin{align}
        \bbg[d-j+1] = \begin{cases}
            -\alpha'_{r_1+2} + \beta'_{j}, \quad & \text{ for } 1\leq j \leq r_1-1 \\
            \beta'_{j}, \quad & \text{ for } r_1\leq j \leq d.
        \end{cases}
        \label{eq:case2.4}
    \end{align}
    Now, we consider the possibilities for $m$.
        \begin{enumerate}[label={\textbf{(\alph*)}},leftmargin=.575in]
            % Subcase 2.4(a): %%%%%%%%%%%%%%%%%%%%%%%%%%%%%%%%%%%%%%%%%%%%%%%%%%%%%%
            \item $m\in \{1,\ldots,r_1+1\}$.
            Since $z_my_{r_1}\cdots y_d \in \inG$ (by (\ref{eq:gb3}) or (\ref{eq:gb5})) and $f\notin \inG$, there exists an index $\ell_1\in \{r_1,\ldots,d\}$ such that $\beta_{\ell_1} = 0$.
            Therefore, since $\alpha_{m}>0$, we have that
            \begin{align*}
                \bbf[d-\ell_1+1] &= \underbrace{\sum_{i=1}^{r_1+3}\alpha_i\sA_{d-\ell_1+1,i}}_{<\, 0} + \underbrace{\sum_{j=1}^{d} \beta_j \sA_{d-\ell_1+1,r_1+3+j}}_{=\,0} < 0,
            \end{align*}
            but this contradicts $\pi(f)=\pi(g)$ since $\bbg[d-\ell_1+1] = \beta'_{\ell_1} \geq 0$ from (\ref{eq:case2.4}).
            % Subcase 2.4(b): %%%%%%%%%%%%%%%%%%%%%%%%%%%%%%%%%%%%%%%%%%%%%%%%%%%%%%
            \item $m\in \{r_1+2,r_1+3\}$.
            Note that since the relatively prime condition implies $m\neq n$, it follows that $\abs{\zsupp{f}} = \abs{\zsupp{g}} = 1$ in this case. 
            Therefore, we may assume without loss of generality that $m = r_1+2$ and $n={r_1+3}$.
            Since $z_{r_1+2}y_1\ldots y_{r_1-1} \in \inG$ (by (\ref{eq:gb4})) and $f\notin \inG$, there exists an index $k_1\in\{1,\ldots,r_1-1\}$ such that $\beta_{k_1} = 0$. 
            Therefore, since $\alpha_{r_1+2} > 0$, we have that $\alpha'_{r_1+2} = 0$ and
            \begin{align*}
                \bbf[d-k_1+1] &= \underbrace{\sum_{i=1}^{r_1+3}\alpha_i\sA_{d-k_1+1,i}}_{<\, 0} + \underbrace{\sum_{j=1}^{d} \beta_j \sA_{d-k_1+1,r_1+3+j}}_{=\,0} < 0.
            \end{align*}
            However, this contradicts $\pi(f)=\pi(g)$ since $\bbg[d-k_1+1] = -\underbrace{\alpha'_{r_1+2}}_{=\, 0} + \beta'_{k_1} \geq 0$ from (\ref{eq:case2.4}). 
        \end{enumerate}
    \end{subcase}
\end{case}
\noindent Since each of the above cases (which together cover all possible pairs $(m,n)$) yields a contradiction, Lemma \ref{lem:grobner} implies that $\sG$ forms a Gr\"{o}bner basis of $I_{\sA}$ with respect to $<_{lex}$, as required.
\end{proof}

In sum, since we have demonstrated that $\sG$ is a Gr\"{o}bner basis of $I_{\sA}$ with respect to $<_{lex}$, we know $in_{<_{lex}}(\sG) = in_{<_{lex}}(I_{\sA})$.
Therefore, since we can clearly see $in_{<_{lex}}(\sG)$ is squarefree, Theorem \ref{thm:main} holds and \cite[Corollary 8.9]{sturmfels} proves Corollary \ref{cor:main}.
As such, there exists a regular unimodular triangulation of the points in $\sA'$, as desired.

%%%%%%%%%%%%%%%%%%%%%%%%%%%%%%%%%%%%%%%%%%%%%%%%%%%% 
\bibliographystyle{plain}
\bibliography{Braun}

\begin{thebibliography}{10}

\bibitem{LaplacianSimplicesDigraphs}
Gabriele Balletti, Takayuki Hibi, Marie Meyer, and Akiyoshi Tsuchiya.
\newblock Laplacian simplices associated to digraphs.
\newblock {\em Arkiv för matematik}, 56, 12 2018.

\bibitem{beck2007computing}
M.~Beck and S.~Robins.
\newblock {\em Computing the Continuous Discretely: Integer-point Enumeration
  in Polyhedra}.
\newblock Undergraduate Texts in Mathematics. Springer New York, 2007.

\bibitem{BraunDavisReflexive}
Benjamin Braun and Robert Davis.
\newblock Ehrhart series, unimodality, and integrally closed reflexive
  polytopes.
\newblock {\em Ann. Comb.}, 20(4):705--717, 2016.

\bibitem{BraunDavisSolusIDP}
Benjamin Braun, Robert Davis, and Liam Solus.
\newblock Detecting the integer decomposition property and {E}hrhart
  unimodality in reflexive simplices.
\newblock {\em Advances in Applied Mathematics}, 100, 08 2016.

\bibitem{BraunLiu}
Benjamin Braun and Fu~Liu.
\newblock {$h^*$}-polynomials with roots on the unit circle.
\newblock {\em Experimental Mathematics}, pages 1--17, 2019.

\bibitem{BrunsRomer}
Winfried Bruns and Tim R{\"o}mer.
\newblock {$h$}-vectors of {G}orenstein polytopes.
\newblock {\em J. Combin. Theory Ser. A}, 114(1):65--76, 2007.

\bibitem{conrads}
Heinke Conrads.
\newblock Weighted projective spaces and reflexive simplices.
\newblock {\em Manuscripta Math.}, 107(2):215--227, 2002.

\bibitem{coxlittleoshea}
David~A. Cox, John Little, and Donal O'Shea.
\newblock {\em Ideals, Varieties, and Algorithms: An Introduction to
  Computational Algebraic Geometry and Commutative Algebra, 3/e (Undergraduate
  Texts in Mathematics)}.
\newblock Springer-Verlag, Berlin, Heidelberg, 2007.

\bibitem{hibiohsugi}
Takayuki Hibi and Hidefumi Ohsugi.
\newblock Quadratic initial ideals of root systems.
\newblock {\em Proc. Amer. Math. Soc.}, 130(7):1913--1922, 2001.

\bibitem{LiuSolusUnimodalityPositivity}
Fu~Liu and Liam Solus.
\newblock On the relationship between {E}hrhart unimodality and {E}hrhart
  positivity.
\newblock {\em Ann. Comb.}, 23(2):347--365, 2019.

\bibitem{NillSimplices}
Benjamin Nill.
\newblock Volume and lattice points of reflexive simplices.
\newblock {\em Discrete Comput. Geom.}, 37(2):301--320, 2007.

\bibitem{SolusNumeralSystems}
Liam Solus.
\newblock Simplices for numeral systems.
\newblock {\em Trans. Amer. Math. Soc.}, 371(3):2089--2107, 2019.

\bibitem{sturmfels}
Bernd Sturmfels.
\newblock {\em Gr\"obner {B}ases and {C}onvex {P}olytopes}, volume~8 of {\em
  University Lecture Series}.
\newblock American Mathematical Society, Providence, RI, 1996.

\end{thebibliography}
%%%%%%%%%%%%%%%%%%%%%%%%%%%%%%%%%%%%%%%%%%%%%%%%%%%%% 

\end{document}